\documentclass[letterpaper,11pt,reqno]{amsart}

\usepackage{amssymb}
\usepackage{amsmath} 
\usepackage{latexsym}
\usepackage{amsbsy}
\usepackage{amsfonts}
\usepackage{graphicx}
\usepackage{enumerate}
\usepackage{enumitem}
\usepackage{mathtools}
\usepackage{bbm} 
\usepackage{color}
\usepackage{url}
\usepackage{comment}
\usepackage{appendix}
\usepackage{caption}
\usepackage{subcaption}

\usepackage{tikz}

\def\marginpar#1{\ignorespaces}

\textheight=600pt \textwidth=450pt \oddsidemargin=8pt \evensidemargin=8pt \topmargin=14pt
\headheight=8pt
\parindent=0pt
\parskip=2pt

\usepackage[square,numbers]{natbib}
\bibliographystyle{abbrvnat}
\usepackage{hyperref}

\newtheorem{theorem}{Theorem}[section]
\newtheorem{lemma}[theorem]{Lemma}
\newtheorem{corollary}[theorem]{Corollary}
\newtheorem{definition}[theorem]{Definition}
\newtheorem{proposition}[theorem]{Proposition}
\newtheorem{remark}[theorem]{Remark}
\newtheorem{claim}[theorem]{Claim}
\newtheorem{assumption}[theorem]{Assumption}

\numberwithin{equation}{section}

\author[Winston Yu]{{Winston} Yu}
\address{Department of Applied Physics and Applied Mathematics, Columbia University. 
} \email{wy2436@columbia.edu}

\author[Qiang Du]{{Qiang} Du}
\address{Department of Applied Physics and Applied Mathematics, Columbia University. 
} \email{qd2125@columbia.edu}

\author[Wenpin Tang]{{Wenpin} Tang}
\address{Department of Industrial Engineering and Operations Research, Columbia University. 
} \email{wt2319@columbia.edu}

\date{\today}

\begin{document}

\title[Vanishing viscosity approximations]{Beyond separability: convergence rate of vanishing viscosity approximations to mean field games via FBSDE stability}

\begin{abstract}
   We study the vanishing viscosity approximation to mean field games (MFGs) in $\mathbb{R}^d$ with a nonlocal and possibly non-separable Hamiltonian. We prove that the value function converges at a rate of $\mathcal{O}(\beta)$, where $\beta^2$ is the diffusivity constant, which matches the classical convergence rate of vanishing viscosity for Hamilton-Jacobi (HJ) equations. The same rate is also obtained for the approximation of the distribution of players as well as for the gradient of the value function. The proof is a combination of probabilistic and analytical arguments by first analyzing the forward-backward stochastic differential equation associated with the MFG, and then applying a general stability result for HJB equations. Applications of our result to $N$-player games, mean field control, and policy iteration for solving MFGs are also presented.
\end{abstract}

\maketitle    

\textit{Key words}: convergence rate, Fokker-Planck equation, forward-backward stochastic differential equation, mean field control, mean field games, non-separable Hamiltonian, nonlocal coupling, policy iteration, vanishing viscosity approximation.

\section{Introduction}

\quad Mean field games (MFGs) were simultaneously proposed by Lasry and Lions in \cite{LL2006a, LL2006b, LL2007}, and by Huang, Malhame, and Caines in \cite{Huang2006},  for the purpose of modeling a game with a large number of players whose decisions are influenced by the distribution of the other players. Due to the large number of players, each player is assumed to have an infinitesimally small influence on all of the other players. Moreover, if we assume that all players act rationally (meaning that they each solve an optimization problem of some cost functional and act as though all other players are also playing rationally), then the system is said to be in a Nash equilibrium. Now suppose that the agents are playing on the state space $\mathbb{R}^d$. Then, one of the most common formulations of MFGs is as a system of coupled partial differential equations (PDEs), of which the first is a Hamilton-Jacobi-Bellman (HJB) equation solved by $u^{\beta}: [0, T] \times \mathbb{R}^d \to \mathbb{R}$, and the second is a Fokker-Planck equation solved by a flow $\rho^{\beta}$ of probability measures on $\mathbb{R}^d$:
\begin{equation}\label{eq:mfg}
\begin{cases}
    -\partial_t u^{\beta} + H(x, -\nabla u^{\beta}, \rho^{\beta}_t) = \frac{\beta^2}{2} \Delta u^{\beta} & \text{ on } [0, T] \times \mathbb{R}^d, \\
    \partial_t \rho^{\beta}_t + \operatorname{div}_x\{\rho^{\beta}_t \nabla_p H(x, -\nabla u^{\beta}, \rho^{\beta}_t)\} = \frac{\beta^2}{2}\Delta \rho^{\beta}_t & \text{ on } [0, T] \times \mathbb{R}^d, \\
    u^{\beta}(T, x) = g(x, \rho^{\beta}_T), \quad \rho^{\beta}_0(x) = m_0(x) & \text{ on } \mathbb{R}^d,
\end{cases}
\end{equation}
where $\beta \ge 0$ is the idiosyncratic noise intensity \footnote{The coefficient $\beta^2/2$ is also called the diffusivity constant.}, $H$ is a possibly non-separable Hamiltonian, $g$ is the terminal cost, and $m_0$ is the initial distribution. When $\beta > 0$, the system is of second order. The system obtained by sending $\beta \to 0$ is called the {\em vanishing viscosity limit}, which is of first order. 
Refer to Section \ref{sc11} for a literature review on the well-posedness of MFGs \eqref{eq:mfg}.

\quad Second-order MFGs are widely used to model complex systems in economics \cite{household_wealth_PDE_original, carmona, Moll_Slideshow} and engineering \cite{djehiche, huang2007large}. Recently, there has been a surge of interest in first-order MFG models, where the state evolves according to deterministic dynamics. Examples include the traffic flow of pedestrian crowds and autonomous vehicles \cite{GM25, Huang2017}, and Proof-of-Stake cryptocurrency mining \cite[Section 5]{trading_and_wealth_evolution}. As prior works have noted, such as in \cite{Huang2017}, traditional methods (e.g., Newton's method) for solving first-order MFGs tend to be numerically unstable. Various approaches \cite{CS14, CSZ24, Liu21} have recently been proposed to address such problems in solving first-order MFGs with a separable Hamiltonian (see \eqref{eq:separability} below), but with no quantitative guarantees. An obvious approach, as already suggested in \cite{AL20}, is to add a small second-order perturbation that corresponds to the addition of a small idiosyncratic noise, and then to solve the resulting second-order MFG \footnote{This approach is also reminiscent of the Lax-Friedrichs approximation scheme to first-order equations, where numerical viscosity is $\beta = \sqrt{\frac{2 (\Delta x)^2}{\Delta t}}$ with $(\Delta x, \Delta t)$ as the space-time discretization (see e.g., \cite{CCGS21, CCS19, Tadmor84}).}. However, the price of gaining numerical stability from the perturbation is to introduce a source of error depending on the noise intensity $\beta$. Denoting $u = u^0$ for the value function of the first-order MFG, one would expect from the classical theory of Hamilton-Jacobi equations (see e.g., \cite[Section IV]{Crandall1983}) that as $\beta \to 0$, $u^{\beta} \rightarrow u$ in some topology. But the classical theory of viscosity solutions cannot immediately provide a convergence rate with respect to $\beta$ for MFGs on account of the coupling with the Fokker-Planck equation.

\quad The purpose of this paper is to provide a quantitative convergence rate to vanishing viscosity limit of the MFG \eqref{eq:mfg} with a general, possibly non-separable Hamiltonian. Previous work \cite{Tang2023b} has studied the convergence rate of vanishing viscosity limits of MFGs with a separable Hamiltonian on the torus $\mathbb{T}^d$; see the end of Sections \ref{sc11} and Remark \ref{rk:comparison} for discussions. Now let us briefly describe our results: we prove that under suitable conditions on the model parameters, for any compact set $\mathcal{K} \subseteq \mathbb{R}^d$, 

$$\|u^{\beta} - u\|_{L^{\infty}([0, T] \times \mathcal{K})} \leq C_{\mathcal{K}} \beta,$$ 

for some constant $C_{\mathcal{K}}$. Under certain regularity conditions, we can show that $C_\mathcal{K}$ grows at most quadratically in the diameter of $\mathcal{K}$. In other words, $u^{\beta} \rightarrow u$ at a rate of $\mathcal{O}(\beta)$ in the topology of uniform convergence on compact sets (Theorem \ref{main_result}), which matches the convergence rate from the classical viscosity theory of Hamilton-Jacobi equations. As intermediary steps, we prove: (1) when the initial condition is bounded, $\nabla u^\beta$ converges to $\nabla u$ at a rate of $\mathcal{O}(\beta)$ in the $L^{\infty}([0, T] \times U)$ metric, for any large enough bounded set $U \subseteq \mathbb{R}^d$ (Theorem \ref{convergence_of_gradient}), as well as in the $L^2(\rho_t)$ metric, uniformly in $t$ (Corollary \ref{cor:L2_convergence_of_grad_ubeta}), and (2) $\rho_t^\beta$ converges to $\rho_t$ in the $2$-Wasserstein distance (Corollary \ref{convergence_of_rho}). Our analysis specifically requires neither the Lasry-Lions nor the displacement monotonicity condition; see Remark \ref{rk:comparison} for a discussion. In Section \ref{sc5}, we also show how to apply our result to various problems, such as particle system approximations and policy iteration for solving MFGs.

\quad Here we give a quick outline of the proof, which is a combination of probabilistic and analytical arguments. From the classical stability theory of PDEs, one might suspect that the difference between $u^{\beta}$ and $u$ is controlled by the difference in the coefficients, i.e., $H(\cdot, \cdot, \rho^{\beta}_t)- H(\cdot, \cdot, \rho_t)$, $g(\cdot, \rho^{\beta}_T)- g(\cdot, \rho_T)$, and $\beta$. However, the dependence on the measure in the first four terms complicates the analysis, because $\rho^{\beta}$ and $\rho$ satisfy their own PDEs that depend on $u^{\beta}$ and $u$, respectively. We avoid this issue by instead analyzing the forward-backward stochastic differential equation (FBSDE) system (defined in \eqref{eq:fbsde}) associated with the MFG system \eqref{eq:mfg}. Our analysis consists of three steps:
\begin{enumerate}[itemsep = 3 pt]
    \item Using the FBSDE representation of the MFG, and temporarily assuming that the initial condition is bounded, we control the $L^2$ difference between $\rho^{\beta}_t$ and $\rho_t$ in terms of $\beta$ and $\|\nabla u^{\beta} - \nabla u\|_{L^{\infty}(U)}$ (Lemma \ref{convergence_in_L2_of_Xbeta_by_beta_grad_ubeta}) for some large enough set $U \subseteq \mathbb{R}^d$.
    \item Then, using the decoupling field of the FBSDE, we prove that $\|\nabla u^{\beta} - \nabla u\|_{L^{\infty}(U)} = \mathcal{O}(\beta)$ (Theorem \ref{convergence_of_gradient}). To remove the assumption that the initial condition is bounded, we apply a stability result for FBSDEs. This implies the convergence of $\rho^{\beta}_t$ to $\rho_t$ at a rate of $\mathcal{O}(\beta)$ in the $L^2$, and hence $W_2$, metric (Corollary \ref{convergence_of_rho}).
    \item We finally apply the previous two steps, in combination with a general PDE stability result (Theorem \ref{HJ_PDE_stability}), to the PDE formulation of MFGs, in order to derive a convergence rate of $\mathcal{O}(\beta)$ for $u^\beta$ to $u$ in the topology of uniform convergence on compact sets (Theorem \ref{main_result}). 
\end{enumerate}
Finally, we comment that while we mostly use the $L^2$ and $W_2$ distances in the statements of our results, we expect that our results should easily extend to $L^p$ and $W_p$ distances when the initial distribution $m_0$ is only $p$-integrable, for $p \in [1, 2)$.

\medskip
{\bf Organization of the paper}: 
The remainder of this paper is organized as follows. Section \ref{sc11} provides a literature review on MFGs,  and compares the result in this paper with prior work. In Section \ref{sc2}, we formally define the problem and collect some assumptions for our result. Section \ref{sc3} proves the convergence rate of $\rho_t^\beta$ and $\nabla u^\beta$, and Section \ref{sc4} proves the convergence rate of $u^\beta$. In Section \ref{sc5}, we give several applications of our result.  Examples and numerical experiments are presented in Section \ref{sc6}. 
We make some concluding remarks in Section \ref{sc7}.

\subsection{Literature review} \label{sc11}
The well-posedness of MFGs has been studied extensively, particularly for the case of a separable Hamiltonian, in which the momentum and the measure arguments are additively separated:
\begin{equation}\label{eq:separability}
    H(x, p, \mu) = H_0(x, p) - f(x, \mu).
\end{equation}
Here $H$ can either be a local or nonlocal function of the measure argument. In the separable, local case and when $\beta = 0$, \cite{Cardaliaguet2015b, Cardaliaguet2015c, Ferreira2021} are some of the major works proving the well-posedness. In the separable, nonlocal case and when $\beta = 0$, \cite{Cardaliaguet2021} proved well-posedness of the MFG system, provided that the Lasry-Lions monotonicity condition holds. 

\quad The separable case is a strong structural assumption, upon which much of the previous literature relied. However, many applications may go beyond this assumption. In the economic model proposed by \cite{household_wealth_PDE_original, Moll_Slideshow}, despite the model of the agent being relatively simple to formulate, such an agent corresponds to a MFG whose Hamiltonian is not separable. Another example is provided by \cite{trading_and_wealth_evolution}, where the Hamiltonian has a term containing the product of the price process (which is a function of the player distribution) and the action of the player. Finally, \cite{AP2018} is one of the works that models mean field games with congestion, where players are penalized based on the density of other players at the current position; as a result of congestion penalizing movement, the Hamiltonian some function of the momentum divided by another function of the density. In order to make sense of the MFGs arising in these applications, the well-posedness of MFGs with a non-separable Hamiltonian must first be established. A breakthrough was made by \cite{Gangbo2022}, and later \cite{Meszaros2024}, for their proposal of a new condition, called displacement monotonicity, under which well-posedness of the MFG system \eqref{eq:mfg} can be proved for all $\beta \geq 0$. To the best of our knowledge, the work that proves well-posedness under the least restrictive regularity assumptions on $H$ and $g$ is \cite{Bansil2024b}, whose main assumption, other than displacement monotonicity, is the uniform boundedness of the second derivatives of $H$ and $g$.

\quad Some earlier works take a probabilistic approach to MFGs as well. \cite{CL15} considers a probabilistic formulation of the MFG, where the volatility is uncontrolled. Their Remark $7.12$ is similar to our FBSDE system \eqref{eq:fbsde}, though our equation of the adjoint process is for the gradient of the value function, not for the value function itself. The later work of \cite{Lacker15} also studies MFGs from a probabilistic perspective, and they prove the existence, though not its uniqueness, of a MFG solution where the volatility is controlled. See \cite{CD18} for further developments in this direction.

\quad While the classical setting of the convergence rate of vanishing viscosity approximations to pure Hamilton-Jacobi equations has been studied extensively, two recent papers \cite{CG25, CD25} were motivated by applications to mean-field control to provide an even sharper convergence rate of $\mathcal{O}(\beta^2 \log(\beta^2))$. Central to both papers is an estimate of the integral of the Laplacian of the value function, with respect to the solution of an adjoint equation \cite{Lin2001}, over $\mathbb{T}^d$ in \cite{CG25} and $\mathbb{R}^d$ in \cite{CD25}. Although they do not apply their results to mean field games, especially ones with a more general, non-separable Hamiltonian (however, in their setting, "non-separability" would mean that the Hamiltonian's momentum and time arguments cannot be separated like Equation \eqref{eq:separability}), it would be interesting to apply their technique to our setting as well.

\medskip
{\bf Comparison to previous work}:
The only previous work addressing the convergence rate of vanishing viscosity approximations to MFGs is \cite{Tang2023b}. Its main assumption is that of a separable Hamiltonian, which (along with the terminal cost function) satisfies the Lasry-Lions monotonicity condition. In contrast, we do not assume any monotonicity condition until one is needed for the well-posedness of the MFG system \eqref{eq:mfg}. Moreover, our result addresses the case of a non-separable Hamiltonian which is nonlocal in the measure argument. We prove that the convergence rate for $\{u^{\beta}\}_{\beta > 0}$ is $\mathcal{O}(\beta)$ in $L^{\infty}([0, T] \times \mathcal{K})$ for any compact set $\mathcal{K} \subseteq \mathbb{R}^d$, which improves upon their rate $\mathcal{O}(\beta^{1/2})$ in $L^1(\mathbb{T}^d)$. This (partially) solves Problem 4(a) in \cite{Tang2023b} for MFGs with nonlocal and possibly non-separable Hamiltonians.

\section{Notations, Assumptions and Problem Formulation} \label{sc2}

\subsection{Notations}

For a metric space $X$, let $C^k(X)$ be the space of functions mapping $X$ to $\mathbb{R}$, which are $k$-times differentiable and whose $k$-th order derivatives are continuous. $C^k_c(X)$ is the subset of $C^k(X)$ whose functions are compactly supported. If $X = [0, T] \times \mathbb{R}^n$, then for $f: [0, T] \times \mathbb{R}^n \to \mathbb{R}$, $\nabla f(t, x)$ refers to $\nabla_x f(t, x) = [\partial_i f(t, x)]_{i=1}^n$, and $\nabla^2 f(t, x)$ refers to $\nabla^2_{xx} f(t, x) = [\partial_{ij} f(t, x)]_{i, j = 1}^n$. In particular, $\nabla f$ and $\nabla^2 f$ do not include the partial derivatives with respect to $t$. 

\quad For $p \in [1, \infty]$, a generic measure space $(X, \mathcal{B}, \mu)$, and a metric space $(Y, |\cdot|)$ (which will almost always be Euclidean space $\mathbb{R}^n$ in this paper), $L^p(X, \mathcal{B}, \mu; Y)$ is the space of $Y$-valued functions whose $p$-th power is integrable with respect to $\mu$, i.e., all $f: X \to \mathbb{R}$ such that $\|f\|^p_{L^p} = \int_X |f|^p d\mu < \infty$. If $p = \infty$, then $L^{\infty}(X, \mathcal{B}, \mu)$ is the space of functions $f$ such that there exists a constant $C > 0$ satisfying $\mu(|f| > C) = 0$; the infimum of all such constants is denoted by $\|f\|_{L^{\infty}(X)}$. If we omit a $\sigma$-field $\mathcal{B}$, then it should be clear from context whether it is the Borel $\sigma$-field or an element of some filtration generated by a stochastic process, for instance. We might also omit specifying the measure $\mu$ when it is clear whether it is, for example, Lebesgue measure on $\mathbb{R}^d$ or a probability measure $\mathbb{P}$ on some sample space $\Omega$. If we do not specify a metric space $Y$, then it should be taken $\mathbb{R}$. We will also make use of $L^{\infty}$ spaces of functions mapping $\mathbb{R}^n$ to $\mathbb{R}^m$, denoted by $L^{\infty}(\mathbb{R}^n; \mathbb{R}^m),$ consisting of functions $f: \mathbb{R}^n \to \mathbb{R}^m$ such that there exists a $C$ with $|f| \leq C$ almost everywhere with respect to the Lebesgue measure on $\mathbb{R}^n$ ($|\cdot|$ is the Euclidean norm on $\mathbb{R}^n$). If $\mathbb{R}^m$ is replaced by $\mathbb{R}^{m \times m}$, the space of $m$ by $m$ matrices, then $|\cdot|$ is replaced by the operator norm $\|\cdot\|_{\infty}$ on matrices. 

\quad Now let $(X, \mathcal{B}, \mathbb{P})$ be a probability space. For a random variable $\xi$, $\operatorname{Law}(\xi)$ is the law of $\xi$ with respect to $\mathbb{P}$. For $p \in [1, \infty]$, we write $\mathcal{P}_p(X)$ for the space of probability measures $\mu$ with finite $p$-th moment, i.e., $\int_X |x|^p d\mu(x) < \infty$. On $\mathcal{P}_p(X)$, we define the $p$-Wasserstein distance $W_p$:
\begin{equation*}
W_p(\mu, \nu) = \inf \left\{ \int_{X \times X} |x - y|^p d\pi(x, y) \colon \pi \in \mathcal{P}(X \times X)\text{ has marginals } \mu \text{ and } \nu \right\}^{1/p}.
\end{equation*}

\quad We also introduce the concept of the Wasserstein gradient of a function $U: \mathcal{P}_2(\mathbb{R}^n) \to \mathbb{R}$. For a more extensive introduction, please refer to \cite[Chapter 5]{CD18}, \cite[Section 1.4]{Cardaliaguet2021}, or \cite[Section 6]{Car12}. For $\mu \in \mathcal{P}_2(\mathbb{R}^n)$, the Wasserstein gradient of $U$ at $\mu$ is denoted by $\nabla_\mu U(\mu, \cdot): \mathbb{R}^n \mapsto \mathbb{R}^n$, and it is an element of the closure of gradients of $C^{\infty}(\mathbb{R}^n)$ functions, with respect to the $L^2(\mathbb{R}^n, \mu)$ metric. Moreover, it follows from \cite[Definition 1.11]{Cardaliaguet2021} \footnote{The Wasserstein gradient is defined slightly differently in \cite[Definition 1.11]{Cardaliaguet2021}, but Equation \ref{eq:def_w2_gradient} can be recovered by taking the transport plan $\pi$ to be $(\xi, \xi + \eta)$ in their notation.} that $\nabla_\mu U$ satisfies: for all $\mathbb{R}^n$-valued $L^2$ random variables $\xi$ and $\eta$,
\begin{equation}\label{eq:def_w2_gradient}
    U(\operatorname{Law}(\xi + \eta)) - U(\operatorname{Law}(\xi)) = \mathbb{E}[ \langle \nabla_{\mu} U(\operatorname{Law}(\xi), \xi), \eta \rangle ] + o(\mathbb{E}[|\eta|^2]^{1/2}).
\end{equation}
See also \cite{GT19, WZ17} for related discussions.

\quad Finally, for $a, b > 0$, the symbol $a = \mathcal{O}(b)$, or $a \lesssim b$ means that $a/b$ is bounded from above, as some problem parameter tends to $0$ or $\infty$. Similarly, $a \asymp b$ means that $a/b$ is bounded from below and from above, as some problem parameter tends to $0$ or $\infty$. 

\subsection{Assumptions} \label{sc22}
Unless otherwise said, we work on a filtered probability space $(\Omega, \mathcal{F}, \mathbb{F}, \mathbb{P})$, $\mathbb{F} = \{\mathcal{F}_t\}_{t \in [0, T]}$, generated by a standard $d$-dimensional Brownian motion $B$. Let $H: \mathbb{R}^d \times \mathbb{R}^d \times \mathcal{P}_2(\mathbb{R}^d) \to \mathbb{R}$ be the Hamiltonian, and $g: \mathbb{R}^d \times \mathcal{P}_2(\mathbb{R}^d) \to \mathbb{R}$ be the terminal cost function. We need the following Lipschitz and regularity conditions on $H$ and $g$, as well as a convexity assumption on $H$ for the FBSDE representation \eqref{eq:fbsde} and a well-posedness assumption.

\begin{assumption}[Regularity of $H$]
\label{hamiltonian_assumption_2nd_derivatives}
The derivatives $\nabla_\mu H$, $\nabla^2_{xx}H$, $\nabla^2_{xp}H$, $\nabla^2_{x \mu} H$, $\nabla^2_{pp}H$, and $\nabla^2_{p \mu}H$ exist. Moreover, despite not specifying a measure on $\mathcal{P}_2(\mathbb{R}^d)$, we say that $\nabla^2_{xx}H$, $\nabla^2_{xp}H$, and $\nabla^2_{pp}H$ have finite $L^{\infty}$ norms on their respective domains in the sense that:
\begin{align*}
    \|\nabla^2_{xx} H\|_{\infty} &:= \sup_{\mu \in \mathcal{P}_2(\mathbb{R}^d)} \|\nabla^2_{xx} H(\cdot, \cdot, \mu)\|_{L^{\infty}(\mathbb{R}^d \times \mathbb{R}^d; \mathbb{R}^{d \times d})} < \infty, \\
    \|\nabla^2_{xp} H\|_{\infty} &:= \sup_{\mu \in \mathcal{P}_2(\mathbb{R}^d)} \|\nabla^2_{xp} H(\cdot, \cdot, \mu)\|_{L^{\infty}(\mathbb{R}^d \times \mathbb{R}^d; \mathbb{R}^{d \times d})} < \infty, \\
    \|\nabla^2_{pp} H\|_{\infty} &:= \sup_{\mu \in \mathcal{P}_2(\mathbb{R}^d)} \|\nabla^2_{pp} H(\cdot, \cdot, \mu)\|_{L^{\infty}(\mathbb{R}^d \times \mathbb{R}^d; \mathbb{R}^{d \times d})} < \infty.
\end{align*}
$\nabla_x H$ and $\nabla_p H$ are Lipschitz in the measure argument with Lipschitz constants $\|\nabla^2_{x \mu} H\|_{\infty}$ and $\|\nabla^2_{p \mu} H\|_{\infty}$: for all $\mu_1, \mu_2 \in \mathcal{P}_2(\mathbb{R}^d)$ and $x, p \in \mathbb{R}^d$,
\begin{align*}
    &|\nabla_x H(x, p, \mu_1) - \nabla_x H(x, p, \mu_2)| \leq \|\nabla^2_{x \mu} H\|_{\infty} W_1(\mu_1, \mu_2), \\
    &|\nabla_p H(x, p, \mu_1) - \nabla_p H(x, p, \mu_2)| \leq \|\nabla^2_{p \mu} H\|_{\infty} W_1(\mu_1, \mu_2).
\end{align*}
Finally, we borrow these assumptions from \cite[(2.5), (2.6)]{Meszaros2024}: $H(\cdot, \cdot, \mu) \in C^{1, 1}_{loc}(\mathbb{R}^d \times \mathbb{R}^d)$, uniformly in $\mu$, and $H(x, p, \cdot)$ is continuous in $\mu$ with respect to $W_1$, locally uniformly on $\mathbb{R}^d \times \mathbb{R}^d$.
\end{assumption}
\begin{assumption}[Convexity of $H$]\cite[(2.7)]{Meszaros2024} \label{convexity_of_H}
    $H$ is uniformly convex in $p$: there exists some $c_0 > 0$ such that $\nabla^2_{pp} H \succcurlyeq c_0I_d$. Also, for each $p \in \mathbb{R}^d$, there exists a constant $C(p)$ such that for all $\mu \in \mathcal{P}_2(\mathbb{R}^d)$, $|\nabla_p H(0, p, \mu)| \leq C(p)$.
\end{assumption}

\begin{assumption}[Regularity of $g$] \label{terminal_cost_assumption_2nd_derivatives}
The derivatives $\nabla_\mu g$, $\nabla^2_{xx}g$, and $\nabla^2_{x \mu} g$ exist. Moreover, despite not specifying a measure on $\mathcal{P}_2(\mathbb{R}^d)$, we say that $\nabla^2_{xx}g$ has finite $L^{\infty}$ norm in the sense that:
\begin{equation*}
    \|\nabla^2_{xx}g\|_{\infty} := \sup_{\mu \in \mathcal{P}_2(\mathbb{R}^d)} \|\nabla^2_{xx} g(\cdot, \mu)\|_{L^{\infty}(\mathbb{R}^d; \mathbb{R}^{d \times d})} < \infty.
\end{equation*}
Again, we borrow these assumptions from \cite{Meszaros2024}: $\nabla_x g$ is Lipschitz in the measure argument with respect to $W_1$, and we denote its Lipschitz constant by $\|\nabla^2_{x \mu} g\|_{\infty}$: for all $\mu_1, \mu_2 \in \mathcal{P}_2(\mathbb{R}^d)$ and $x \in \mathbb{R}^d$,
\begin{align*}
    |\nabla_x g(x, \mu_1) - \nabla_x g(x, \mu_2)| \leq \|\nabla^2_{x \mu} g\|_{\infty} W_1(\mu_1, \mu_2).
\end{align*}
Finally, $g(\cdot, \mu) \in C^1_{loc}(\mathbb{R}^d)$ uniformly in $\mu$, and $g(x, \cdot)$ is continuous with respect to $W_1$, locally uniformly in $\mathbb{R}^d$.
\end{assumption}

\begin{assumption} \label{initial_distribution}
The initial condition $m_0$ is an element of $L^2(\Omega, \mathcal{F}_0, \mathbb{P})$. 
\end{assumption}

\begin{assumption} \label{stability_wrt_initial_condition}
The McKean-Vlasov FBSDE \eqref{eq:fbsde} is stable with respect to the initial condition. To be precise, let $\xi, \tilde{\xi} \in L^2(\Omega, \mathcal{F}_0, \mathbb{P})$ have laws $m_0, \tilde{m}_0$ satisfying Assumption \ref{initial_distribution}. Let the solution to Equation \eqref{eq:fbsde} with initial condition $\xi \sim m_0$ be denoted as $(X, Y, Z)$; similarly, let $(\tilde{X}, \tilde{Y}, \tilde{Z})$ denote the solution to Equation \eqref{eq:fbsde} but with initial condition $\tilde{\xi} \sim \tilde{m_0}$. We assume that for some constant $C$ depending only on $T$ and the Lipschitz constants of $\nabla_p H$ and $\nabla_x H$,
\begin{equation}
    \mathbb{E} \bigg[\sup_{t \in [0, T]} \big\{ |X_t - \tilde{X}_t|^2\big\}\bigg] \leq C \mathbb{E} \big[|\xi - \tilde{\xi}|^2\big].
\end{equation}
This assumption is satisfied by FBSDEs covered by certain conditions sufficient for the well-posedness of the FBSDE associated with the MFG, such as displacement monotonicity \cite[Theorem 4.5]{Meszaros2024}. We would like to mention that the specific structure that displacement monotonicity is not the key to our results, as any other condition that implies the property of stability with respect to the initial condition could replace displacement monotonicity without changing our results.
\end{assumption}

\begin{assumption} \label{mfg_wellposedness}
For all $\beta \geq 0$, the MFG \eqref{eq:mfg} is well-posed with solution $(u^{\beta}, \rho^{\beta})$ in the sense of Definition \ref{definition_of_solution}. This assumption will be in force for the rest of the paper, even when we do not say so explicitly.
\end{assumption}

\begin{remark}
We present two classes of Hamiltonians that satisfy Assumptions \ref{hamiltonian_assumption_2nd_derivatives} and \ref{convexity_of_H}.
\begin{enumerate}[itemsep = 3 pt]
\item A quite general class of Hamiltonians is given by the following. Let $F, \gamma_1, \gamma_2: \mathbb{R}^d \mapsto \mathbb{R}$ and $U_1, U_2: \mathcal{P}_2(\mathbb{R}^d) \mapsto \mathbb{R}$. Then
\begin{equation*}
    H(x, p, \mu) = F(x) + \gamma_2(p)+ \gamma_1(p)U_1(\mu) + U_2(\mu)
\end{equation*}
satisfies Assumptions \ref{hamiltonian_assumption_2nd_derivatives} and \ref{convexity_of_H} if $\nabla^2 F$, $U_1$, $(\nabla^2 \gamma_1) U$, and $\nabla \gamma_1 \nabla_{\mu} U_1$ are bounded, and if there exists $C, c > 0$ such that $c \preccurlyeq \nabla^2 \gamma_2 \preccurlyeq C$ and $c \preccurlyeq \nabla^2 \gamma_1 U_1 \preccurlyeq C$.

\item The following Hamiltonian is an example that satisfies Assumptions \ref{hamiltonian_assumption_2nd_derivatives} and \ref{convexity_of_H} due to the addition of a large enough quadratic. Let $\Gamma > 0$, $F: \mathbb{R} \times \mathcal{P}_2(\mathbb{R}^d) \mapsto \mathbb{R}$, $\gamma: \mathbb{R}^d \mapsto \mathbb{R}$, and
\begin{equation*}
    H(x,p,\mu) = \Gamma |p|^2 + \gamma(p) F(x, \mu).
\end{equation*}
Then Assumptions \ref{hamiltonian_assumption_2nd_derivatives} and \ref{convexity_of_H} are satisfied if $\nabla \gamma$, $\nabla^2 \gamma \cdot F$, $F(0, \cdot)$, $\nabla_x F$, $\nabla^2_{xx} F$, $\nabla_{\mu} F$ are bounded and if $\Gamma$ is large enough: $\Gamma > \tfrac{1}{2}\inf \{ \|\nabla^2 \gamma(p)\| F(x, \mu)\}$. 

\item Consider the one-dimensional example 
\begin{equation*}
    H(x, p, \mu) = f(x) + xp +  (p - U(\mu))^2
\end{equation*}
for $f: \mathbb{R} \mapsto \mathbb{R}$, $U: \mathcal{P}_2(\mathbb{R}) \mapsto \mathbb{R}$. If $\nabla^2 f, \nabla_{\mu} U$ are bounded, then $H$ satisfies Assumptions \ref{hamiltonian_assumption_2nd_derivatives} and \ref{convexity_of_H}, and if there is some $C(p)$ such that $|p - U(\mu)| \leq C(p)$ for each $\mu$. This example is inspired by \cite{trading_and_wealth_evolution}, despite its Hamiltonian being time-dependent. While our assumptions do not allow for time-varying Hamiltonians, we expect our results to extend to them under suitable regularity conditions.

\item For a function $F: \mathbb{R} \times \mathbb{R} \mapsto \mathbb{R}$ and a mollifier $\varphi \in C^2_{bc}(\mathbb{R}^d)$ (i.e., its derivatives up to the second order are bounded and compactly supported), $\eta > 0$, and $q > 2$, define
\begin{equation*}
    H(x, p, \mu) =  \Gamma |p|^2 + \frac{\gamma(p)}{|(\varphi \ast \mu)(x) + \eta|^q} - F(x, (\varphi \ast \mu)(x)).
\end{equation*}
If $\gamma, \nabla \gamma, \nabla_x F, \nabla_m F$ are bounded, and if $\Gamma$ is large enough, then Assumptions \ref{hamiltonian_assumption_2nd_derivatives} and \ref{convexity_of_H} are satisfied. This example is motivated by non-separable Hamiltonians from MFGs modeling congestion \cite{AP2018}. The idea behind the convolution of $\mu$ with $\varphi$ is to transform the Hamiltonian that is usually encountered in MFGs with congestion from a local one into a nonlocal one. From a modeling perspective, it should be interpreted as agents not only taking into account the density of agents at their current position, but also the density of agents in some compact set around the agent's position. Ideally we would allow $\gamma$ to be unbounded, such as setting $\gamma(p) = |p|^r/r$ as in \cite{AP2018}, but this would violate the assumption that $\|\nabla^2_{p \mu} H\|_{\infty} < \infty$ as well as the assumption of uniform convexity if $r \neq 2$.

\end{enumerate}
\end{remark}

\quad Our assumptions largely agree with those in \cite{Bansil2024b} (namely, Assumptions 2.6(1) and 2.7(1) therein). To reiterate, \cite{Bansil2024b} is, to the best of our knowledge, the work with the least restrictive regularity assumptions that guarantee well-posedness of the master equation.

\quad As discussed in \cite[Section 3]{Meszaros2024}, Assumption \ref{mfg_wellposedness} can be satisfied by MFGs that do not necessarily possess displacement monotone $H$ or $g$. Indeed, we allow any $H$ and $g$ that satisfy any conditions that are sufficient to guarantee well-posedness (and do not contradict Assumptions \ref{hamiltonian_assumption_2nd_derivatives}--\ref{initial_distribution}). See, for example, \cite{Graber2023, Mou2024} for additional conditions sufficient for well-posedness that are beyond the Lasry-Lions and displacement monotonicity conditions.

\begin{remark}[Comparison with \cite{Tang2023b}] \label{rk:comparison}
We compare our assumptions to those of \cite{Tang2023b} in greater detail. In terms of regularity, their condition (H1') on the $C^2$ norm of the coupling says that the coupling must have bounded first and second derivatives, which is slightly stronger than ours (we allow for $\nabla_x H$ to be unbounded). 
Their condition (H2'), that the second-order derivatives of $H$ are locally bounded, is of course not as strong as uniform boundedness. 
Their condition (H3') is stronger than ours, which requires that the $C^2$ norm of the terminal cost is bounded, while we only require that $g$ be Lipschitz in the measure argument with respect to $W_1$.
Finally, (H4') and (H4'') do not seem to be directly comparable to the regularity of L-derivatives.
However, observe that if the condition (in H4' and H4''):
\begin{equation*}
    \sup_{x \in \mathbb{T}^d} |f(x, \mu') - f(x, \mu)| \lesssim 
    \left(\int_{\mathbb{T}^d} (f(x, \mu') - f(x,\mu)) d(\mu' - \mu)(x) \right)^{\frac{1}{2}},
\end{equation*}
were replaced with
\begin{equation*}
    \sup_{x \in \mathbb{T}^d} |f(x, \mu') - f(x, \mu)| \lesssim
    \int_{\mathbb{T}^d} (f(x, \mu') - f(x,\mu)) d(\mu' - \mu)(x),
\end{equation*}

then they would have achieved the same convergence rate as we did.

\quad Moreover, we did not find necessary to assume the monotonicity conditions that \cite{Tang2023b} imposed on their Hamiltonian and coupling. 
For instance, we could impose any of the four conditions in \cite{Graber2023} that guarantee well-posedness of the MFG system, 
while \cite{Tang2023b} did require the Lasry-Lions monotonicity condition for their proof \footnote{The Lasry-Lions monotonicity condition is needed in their equations (6.10) and (6.11).}.
Most importantly, \cite{Tang2023b} relied on the separable Hamiltonian structure, and their proof technique seems difficult to adapt to the non-separable case on account of the dual equation technique \cite{Lin2001}.
\end{remark}

\subsection{Problem Formulation}

Let $(\Omega, \mathcal{F}, \mathbb{F}, \mathbb{P})$ be a filtered probability space for some time horizon $T > 0$, where the filtration $\mathbb{F}$ is generated by a standard $d$-dimensional Brownian motion $\{B_t\}_{t=0}^T$. For a sequence of measures $\{\mu_s\}_{s=0}^T \subseteq \mathcal{P}(\mathbb{R}^d)$, a representative agent takes a path $\{X^{\alpha, \mu}_s\}_{s=t}^T$ through $\mathbb{R}^d$, which satisfies the SDE:
\begin{equation*}
\begin{cases}
    dX^{\alpha, \mu}_s = \alpha_s(X^{\alpha, \mu}_s, \mu_s) ds + \beta dB_s & \text{for }  s \in (t, T],\\
    X^{\alpha, \mu}_t = x,
\end{cases}
\end{equation*}
for some initial position $x \in \mathbb{R}^d$, initial time $t \in [0, T]$, and adapted stochastic process $\{\alpha_s\}_{s=t}^T$ (referred to as the control). Its goal is to minimize the following cost functional $J$:
\begin{equation*}
    J(t, x; \alpha) = \mathbb{E} \bigg[ \int_t^T L(X^{\alpha, \mu}_s, \alpha_s, \mu_s) ds + g(X^{\alpha, \mu}_T, \mu_T) \bigg| X^{\alpha, \mu}_t = x\bigg],
\end{equation*}
over all controls $\alpha$. Here, the Lagrangian $L$ is a running cost function that is the Legendre transform of $H$ in the second variable, $g$ is a terminal cost function, and $\beta \geq 0$ is the idiosyncratic noise intensity faced by each player. Let $u^{\beta}(t, x) = \inf_{\alpha} J(t, x; \alpha)$. From classical optimal control, if all players play optimally in the sense of a Nash equilibrium, then at time $s$ and position $x$, each player chooses the action $\alpha: [0, T] \times \mathbb{R}^d \times \mathcal{P}(\mathbb{R}^d) \mapsto \mathbb{R}^d$, defined by
\begin{equation*}\label{eq:optimal_action}
    \alpha_t(x, \rho) = \nabla_p H(x, -\nabla u^{\beta}(t, x), \rho),
\end{equation*}
and we can set the sequence of measures $\mu$ to be $\rho^{\beta}$, the solution to the Fokker-Planck equation in Equation \eqref{eq:mfg}. As both $\alpha$ and $\mu$ are fixed, and since our focus is on the dependence of $u^{\beta}$ on $\beta$, we replace $\alpha$ and $\mu$ in $X^{\alpha, \mu}$ by the noise intensity $\beta$. We use $X^{\beta}$ to refer to the stochastic process representing the path of the agent, and we use $\rho^{\beta}_s$ to refer to the law of $X^{\beta}_s$. Moreover, the pair $(u^{\beta}, \rho^{\beta})$ solves the coupled PDEs \eqref{eq:mfg} in the following sense:

\begin{definition}\cite[Definition 3.2]{Meszaros2024}\label{definition_of_solution}
We say that $(u^{\beta}, \rho^{\beta})$ is a solution to the MFG system \ref{eq:mfg} if
\begin{enumerate}[itemsep = 3 pt]
\item for all $t \in [0, T]$, the Lipschitz constant of $u^{\beta}(t, \cdot)$ restricted to any compact set is finite, $u^{\beta}$ is a viscosity solution to the HJB equation, and $\nabla^2_{xx}u^{\beta} \in L^{\infty}([0, T] \times \mathbb{R}^d; \mathbb{R}^{d \times d})$.
\item $\rho^{\beta}_{\cdot}: [0, T] \to (\mathcal{P}_1(\mathbb{R}^d), W_1)$ is continuous, and $\rho^{\beta}$ solves the Fokker-Planck equation in the distributional sense: for all test functions $\varphi \in C^{\infty}_c([0, T] \times \mathbb{R}^d)$, the following equation holds:
\begin{equation*}
    \int_0^T \int_{\mathbb{R}^d} \bigg[ -\partial_t \varphi + \langle \nabla \varphi, \nabla_p H(x, -\nabla u^{\beta}(t, x), \rho^{\beta}_t)\rangle \bigg] d\rho^{\beta}_t(x) dt = \frac{\beta^2}{2} \int_0^T \int_{\mathbb{R}^d}\Delta \varphi d \rho^{\beta}_t(x) dt.
\end{equation*}
Moreover, $\rho_0^\beta$ satisfies the initial condition: 
\begin{equation*}
    \int_{\mathbb{R}^d} \varphi(0, x) dm_0(x) = \int_{\mathbb{R}^d} \varphi(0, x) d \rho^{\beta}_0(x).
\end{equation*}
\end{enumerate}
\end{definition}

\quad It is worth noting that from \cite[Lemma 3.4]{Meszaros2024}, there is an estimate for $\nabla^2_{xx} u^{\beta}$ which is uniform in $\beta$: $K:= \sup_{\beta > 0} \|\nabla^2_{xx} u^{\beta}\|_{L^{\infty}([0, T] \times \mathbb{R}^d; \mathbb{R}^{d \times d})} < \infty$. Moreover, according to \cite[Theorem 4.1]{Meszaros2024}, under Assumption \ref{convexity_of_H}, the MFG \eqref{eq:mfg} has an FBSDE representation:
\begin{equation}\label{eq:fbsde}
\begin{cases}
dX^{\beta}_t =\nabla_p H(X^{\beta}_t, Y^{\beta}_t, \rho^{\beta}_t) dt + \beta dB_t, \\
dY^{\beta}_t = -\nabla_x H(X^{\beta}_t, Y^{\beta}_t, \rho^{\beta}_t) dt + \beta Z^{\beta}_t dB_t, \\
X^{\beta}_0 \sim m_0, \quad Y^{\beta}_T = -\nabla_x g(X^{\beta}_T, \rho^{\beta}_T),
\end{cases}
\end{equation}
and it has a strong solution. The first SDE describes the state dynamics of an agent playing in a Nash equilibrium. The key to our analysis is the fact that $\nabla u^{\beta}: [0, T] \times \mathbb{R}^d \to \mathbb{R}^d$ is a decoupling field for the FBSDE \eqref{eq:fbsde} in the sense that:
\begin{equation*}\label{eq:decoupling_field}
    Y^{\beta}_t = -\nabla u^{\beta}(t, X^{\beta}_t) \quad \mbox{for almost every } t \in [0, T].
\end{equation*}
The meaning of $\nabla u^{\beta}$ to the FBSDE is that the SDE for $Y^{\beta}$ is solved by the gradient of the value function evaluated along the trajectory of a typical agent playing in a Nash equilibrium. Instead of relying on PDE methods such as the dual equation as \cite{Tang2023b} did, we use a probabilistic approach that hinges on stability properties of the FBSDE \eqref{eq:fbsde} to derive our main result.

\section{Convergence of $\rho^{\beta}$ and $\nabla u^{\beta}$} \label{sc3}

\quad The difficulty of analyzing MFGs with a non-separable Hamiltonian is that without the assumption of separability \eqref{eq:separability}, we can no longer analyze each equation separately. However, as discussed previously, we can use the convenient property of the FBSDE \eqref{eq:fbsde} having $-\nabla u^{\beta}$ as a decoupling field in the sense of Equation \eqref{eq:decoupling_field}. By substituting Equation \eqref{eq:decoupling_field} into the SDE for $X^{\beta}$, we can analyze it separately from the SDE for $Y^{\beta}$. Furthermore, because $\rho^{\beta}_t = \operatorname{Law}(X^{\beta}_t)$ for all $t \in [0, T]$, we can get a convergence rate for $\rho^{\beta}$ to $\rho$ in $L^2$, and hence, in $W_2$. 

\quad Before we state the results, we define the following FBSDE system for $t_0 < T$ and $\zeta \in L^2(\Omega, \mathcal{F}_{t_0}; \mathbb{P}; \mathbb{R}^d)$:
\begin{equation}\label{eq:fbsde-arbitrary-initial-time}
\begin{cases}
dX^{\beta}_t = \nabla_p H(X^{\beta}_t, Y^{\beta}_t, \rho^{\beta}_t) dt + \beta dB_t, \\
dY^{\beta}_t = - \nabla_x H(X^{\beta}_t, Y^{\beta}_t, \rho^{\beta}_t) dt + \beta Z^{\beta}_t dB_t, \\
X^{\beta}_{t_0} = \zeta , \quad Y^{\beta}_T = -\nabla_x g(X^{\beta}_T, \rho^{\beta}_T).
\end{cases}
\end{equation}
Its only differences compared to \eqref{eq:fbsde} is that the initial time $t_0$ is not necessarily $0$ and that the initial condition does not need to have the law $m_0$. To lighten notation, when $\beta = 0$, we denote by $X_t = X^0_t$ to be the solution to the FBSDE \eqref{eq:fbsde} or \eqref{eq:fbsde-arbitrary-initial-time}. For a bounded set $U \subseteq \mathbb{R}^d$, we write $A(t_0, T^{\prime}; U) \in \mathcal{F}_{T^{\prime}}$ to be the event that $X$ does not exit $U$ between $[t_0, T^{\prime}]$, i.e.
\begin{equation*}
    A(t_0, T^{\prime}; U) = \{X_s \in U \text{ for all } s \in [t_0, T^{\prime}]\}.
\end{equation*}

\begin{lemma}\label{supersized_set}
    Let $t_0 \in [0, T)$ and $\zeta \in L^{\infty}(\Omega, \mathcal{F}_{t_0}, \mathbb{P}; \mathbb{R}^d)$. If $X$ is the solution to Equation \eqref{eq:fbsde-arbitrary-initial-time} with $\beta = 0$ and initial condition $\zeta$, then there exists a bounded set $U$ such that $A(t_0, T; U)$ has probability $1$. It is worth mentioning for future reference the obvious corollary that for any set $\tilde{U}$ containing $U$, $A(t_0, T; \tilde{U})$ also has probability $1$.
\end{lemma}
\begin{proof}
By a standard argument using Gr\"{o}nwall's inequality, $K < \infty$, the definition of $X$, and $\nabla u$ being a decoupling field,
\begin{equation*}
    \sup_{t \in [0, T]} \mathbb{E} \big[ |X_t|^2 \big] < \infty.
\end{equation*}
It follows that $(x, p, t) \mapsto \nabla_p H(x, p, \rho_t)$ has a Lipschitz constant uniform in $t$. By classical ODE theory, $X_{\cdot}: [t_0, T] \mapsto \mathbb{R}^d$ is an element of $C^1([0, T]; \mathbb{R}^d)$ for all $\omega \in \Omega$. Since the law of $\zeta$ is compactly supported, there exists some bounded set $U$ such that for all $t \in [t_0, T]$, $X_t \in U$ for all $\omega$.
\end{proof}

\begin{lemma}\label{convergence_in_L2_of_Xbeta_by_beta_grad_ubeta}
Let $\beta \geq 0$, $t_0 \in [0, T)$, and $\zeta \in L^{\infty}(\Omega, \mathcal{F}_{t_0}, \mathbb{P}; \mathbb{R}^d)$. Consider the uncoupled state variable dynamics:
\begin{equation}\label{eq:uncoupled_state_dynamics}
\begin{cases}
    dX^{\beta}_t = \nabla_p H(X^{\beta}_t, -\nabla u^{\beta}(t, X^{\beta}_t), \rho^{\beta}_t) dt + \beta dB_t, \\
    X^{\beta}_{t_0} = \zeta.
\end{cases}
\end{equation}
Under Assumptions \ref{hamiltonian_assumption_2nd_derivatives} -- \ref{terminal_cost_assumption_2nd_derivatives}, there exists a constant $C = C(H, g, T)$, where the dependence on $H$ and $g$ is only through the $L^{\infty}$ norms of their second-order derivatives, such that for all $T^{\prime} \in [t_0, T]$,
\begin{equation}
    \sup_{t \in [t_0, T^{\prime}]} \mathbb{E}[|X^{\beta}_t - X_t|^2] \leq C \left\{ \beta^2 + \int_{t_0}^{T^{\prime}} \|\nabla u^{\beta}(s, \cdot) - \nabla u(s, \cdot) \|^2_{L^{\infty}(U)}ds \right\}.
\end{equation}
\end{lemma}
\begin{proof}
Firstly, suppose $U$ is defined as above the statement of the lemma: as a bounded set such that for all $t \in [t_0, T]$, $X_t \in U$. For any $\beta \geq 0$ and $s \in [t_0, T]$, 
\begin{equation*}
    \|\nabla u^{\beta}(s, \cdot) - \nabla u(s, \cdot)\|_{L^{\infty}(U)} < \infty
\end{equation*}
because $\nabla u^{\beta}(s, \cdot)$ is locally Lipschitz continuous in $x$, on account of $K < \infty$. Now, note that the equation \eqref{eq:uncoupled_state_dynamics} is the SDE satisfied by a solution to the $X$ component of the FBSDE \eqref{eq:fbsde-arbitrary-initial-time}, the existence of which is guaranteed by \cite[Theorem 4.1]{Meszaros2024} and Assumption \ref{convexity_of_H}. Using Assumptions \ref{hamiltonian_assumption_2nd_derivatives} and \ref{terminal_cost_assumption_2nd_derivatives}, as well as $K < \infty$, we have that for all $s \in [t_0, T^{\prime}]$:
\begin{equation}\label{eq:difference_in_drift_for_Xbeta_X}
\begin{aligned}
    \mathbb{E} [ |\nabla_p &H(X^{\beta}_s, -\nabla u^{\beta}(s, X^{\beta}_s), \rho^{\beta}_s) - \nabla_p H(X_s, -\nabla u(s, X_s), \rho_s) ] \\
    &\leq \mathbb{E}[\|\nabla^2_{xp} H\|_{\infty} |X_s^{\beta} - X_s| + \|\nabla^2_{pp} H\|_{\infty} (|\nabla u^{\beta} (s, X^{\beta}_s) - \nabla u^{\beta}(s, X_s)| \\
    & \quad \quad \quad + \|\nabla u^{\beta}(s, \cdot) - \nabla u(s, \cdot)\|_{L^{\infty}(U)})] + \|\nabla^2_{p \mu} H\|_{\infty} W_1(\rho^{\beta}_s, \rho_s) \\
    &\leq C \big\{ \mathbb{E}[|X_s^{\beta} - X_s|] + \|\nabla u^{\beta}(s, \cdot) - \nabla u(s, \cdot)\|_{L^{\infty}(U)} \big\}
\end{aligned}
\end{equation}
where the expectations are finite due to the assumption that $\zeta \in L^{\infty}$. As a result, there exists some constant $C$ depending only on $T$ and the Lipschitz constants of the gradients of $H$ and $g$, such that for all $T^{\prime} \leq T$: 
\begin{equation*}
\begin{split}
    \mathbb{E}&[|X^{\beta}_{T^{\prime}} - X_{T^{\prime}}|^2 \cdot \mathbf{1}_{A(t_0, T^{\prime}; U)}] = \mathbb{E}[|X^{\beta}_{T^{\prime}} - X_{T^{\prime}}|^2]\\
    & \leq C \mathbb{E}\left[ \int_{t_0}^{T^{\prime}} |\nabla_p H(X^{\beta}_s, -\nabla u^{\beta}(s, X^{\beta}_s), \rho^{\beta}_s) - \nabla_p H(X_s, -\nabla u(s, X_s), \rho_s)|^2 \ ds + \beta |B_{T^{\prime}}|^2 \right] \\
    & \leq C \left\{ \beta^2 + \int_{t_0}^{T^{\prime}} \mathbb{E}[|X^{\beta}_s - X_s|^2] + \|\nabla u^{\beta}(s, \cdot) - \nabla u(s, \cdot)\|^2_{L^{\infty}(U)} \ ds \right\}
\end{split}
\end{equation*}
where we used the fact that $A(t_0, T^{\prime}; U)$ has probability $1$ in the first line, the definition of $X^{\beta}$ and $X$ in the second line, and $K < \infty$ and Equation \eqref{eq:difference_in_drift_for_Xbeta_X} in the third line. By Gr\"{o}nwall's inequality, there exists another constant $C = C(H, g, T)$ such that
\begin{equation*}
    \mathbb{E}[|X^{\beta}_{T^{\prime}} - X_{T^{\prime}}|^2] \leq C \left\{\beta^2 + \int_{t_0}^{T^{\prime}} \|\nabla u^{\beta}(s, \cdot) - \nabla u(s, \cdot)\|^2_{L^{\infty}(U)} \ ds \right\}.
\end{equation*}
Using the fact that the right-hand side is non-decreasing in $T^{\prime}$ as well as the uniformity of $C$ in $s \in [t_0, T]$ yields the conclusion.
\end{proof}
\quad The previous result hints that in order to quantify the convergence of $\rho^{\beta}_t$ to $\rho_t$, it suffices to control the convergence of $\{\nabla u^{\beta}(t, \cdot) \}_{\beta > 0}$ to $\nabla u(t, \cdot)$ in $L^{\infty}(U)$. In the following theorem, we obtain a convergence rate of $\mathcal{O}(\beta)$ for $\nabla u^{\beta}$ to $\nabla u$ by using its role as the decoupling field.
\begin{theorem}\label{convergence_of_gradient}
Let $u^{\beta}$ and $u$ be solutions to the value function component of the MFG system \eqref{eq:mfg} with $\beta > 0$ and $\beta = 0$, respectively. Suppose that Assumptions \ref{hamiltonian_assumption_2nd_derivatives} -- \ref{terminal_cost_assumption_2nd_derivatives} hold and that $\zeta \in L^{\infty}(\Omega, \mathcal{F}_{t_0}, \mathbb{P}; \mathbb{R}^d)$. Then there exists a constant $C = C(H, g, T)$, where the dependence on $H$ and $g$ is only through the $L^{\infty}$ bounds on the second-order derivatives of $H$ and $g$, and some bounded set $U$ that is large enough for $A(t_0, T; U)$ to have probability $1$, such that
\begin{equation*}
    \|\nabla u^{\beta} - \nabla u\|_{L^{\infty}([0, T] \times U)} \leq C \beta.
\end{equation*}
\end{theorem}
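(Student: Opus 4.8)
The plan is to exploit that $-\nabla u^{\beta}$ is a decoupling field for the FBSDE \eqref{eq:fbsde-arbitrary-initial-time}. Fix $(t_0,x)\in[0,T)\times\mathbb{R}^d$ and run two copies of \eqref{eq:fbsde-arbitrary-initial-time}, one with the given noise intensity $\beta>0$ and one with $\beta=0$, both started from the \emph{deterministic} point $X^{\beta}_{t_0}=X_{t_0}=x$ and driven by the same Brownian motion (Assumptions \ref{convexity_of_H} and \ref{mfg_wellposedness} guarantee that both have strong solutions, and that $Y^{\beta}_{t_0}=-\nabla u^{\beta}(t_0,x)$, $Y_{t_0}=-\nabla u(t_0,x)$ are deterministic, via \cite[Theorem 4.1]{Meszaros2024}). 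Thus $|\nabla u^{\beta}(t_0,x)-\nabla u(t_0,x)|=|Y^{\beta}_{t_0}-Y_{t_0}|$, and I would estimate the latter. Writing both backward equations in integral form, subtracting, and taking expectations — the martingale term $\int_{t_0}^{T}\beta Z^{\beta}_s\,dB_s$ has mean zero by the square-integrability built into the strong solution, and the entire $\beta=0$ expression equals its own expectation — gives
\begin{equation*}
|Y^{\beta}_{t_0}-Y_{t_0}|\le \mathbb{E}\bigl|\nabla_x g(X^{\beta}_T,\rho^{\beta}_T)-\nabla_x g(X_T,\rho_T)\bigr|+\int_{t_0}^{T}\mathbb{E}\bigl|\nabla_x H(X^{\beta}_s,Y^{\beta}_s,\rho^{\beta}_s)-\nabla_x H(X_s,Y_s,\rho_s)\bigr|\,ds.
\end{equation*}

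\textbf{Reducing to an integral inequality for $\phi(t):=\|\nabla u^{\beta}(t,\cdot)-\nabla u(t,\cdot)\|_{\infty}$.} Next I would expand the right-hand side using Assumptions \ref{hamiltonian_assumption_2nd_derivatives} and \ref{terminal_cost_assumption_2nd_derivatives}: the uniform bounds on $\nabla^2_{xx}H,\nabla^2_{xp}H,\nabla^2_{xx}g$ and the $W_1$-Lipschitz continuity of $\nabla_x H,\nabla_x g$ in the measure argument bound each integrand, at time $s$, by a fixed linear combination of $\mathbb{E}|X^{\beta}_s-X_s|$, $\mathbb{E}|Y^{\beta}_s-Y_s|$, and $W_1(\rho^{\beta}_s,\rho_s)$. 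For the middle term I would again invoke the decoupling field, $Y^{\beta}_s=-\nabla u^{\beta}(s,X^{\beta}_s)$ and $Y_s=-\nabla u(s,X_s)$, together with the uniform-in-$\beta$ semiconcavity bound $K=\sup_{\beta}\|\nabla^2_{xx}u^{\beta}\|_{\infty}$ from \cite[Lemma 3.4]{Meszaros2024}, to get $\mathbb{E}|Y^{\beta}_s-Y_s|\le K\,\mathbb{E}|X^{\beta}_s-X_s|+\phi(s)$. The two state quantities are then controlled by Lemma \ref{convergence_in_L1}: applied to the process started at $(t_0,x)$ it bounds $\mathbb{E}|X^{\beta}_s-X_s|$; applied to the equilibrium process started at $m_0$ (for which $\rho^{\beta}_s=\operatorname{Law}(X^{\beta}_s)$ and $\rho_s=\operatorname{Law}(X_s)$, so $W_1(\rho^{\beta}_s,\rho_s)\le \mathbb{E}|X^{\beta}_s-X_s|$) it bounds the measure error; in both cases by $C\{\int \phi(r)\,dr+\beta\}$. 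Substituting everything and taking the supremum over $x\in\mathbb{R}^d$ — all constants involved ($\|\nabla^2_{xx}H\|_\infty$, $K$, etc.) are uniform in $x$, which is exactly why the conclusion will hold on all of $\mathbb{R}^d$ rather than only on compacts — yields an inequality of the schematic form
\begin{equation*}
\phi(t_0)\ \le\ C\Bigl(\textstyle\int_{0}^{T}\phi(r)\,dr+\beta\Bigr)\qquad\text{for all }t_0\in[0,T],
\end{equation*}
with $C=C(H,g,T)$ depending on $H,g$ only through $L^{\infty}$-bounds on their second derivatives (and $K$, which is itself so controlled).

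\textbf{Closing the loop, and the main obstacle.} The last step is to deduce $\phi\equiv\mathcal{O}(\beta)$. The difficulty I expect to be central is the coupled forward--backward nature of the estimate: $\phi$ (which propagates backward from $T$) is bounded in terms of the measure error $W_1(\rho^{\beta}_\cdot,\rho_\cdot)$, which by Lemma \ref{convergence_in_L1} propagates \emph{forward} from $0$ and so legitimately brings in $\phi$ over the whole interval $[0,T]$, not just over $[t_0,T]$; consequently a plain backward Gr\"onwall does not immediately absorb the right-hand side. When $T$ is small this is resolved directly by Gr\"onwall's inequality; for general $T$ one first uses the a priori regularity of the solutions guaranteed by Assumption \ref{mfg_wellposedness} to know that $\phi$ is finite, and then iterates the estimate over a partition of $[0,T]$ into finitely many subintervals of a fixed length determined by the structural constants of $H$ and $g$, which produces the constant $C(H,g,T)$ (possibly growing exponentially in $T$, consistent with the statement). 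The remaining work — keeping precise track of which second-order norms of $H$ and $g$ enter $C$, and of the factors of $T$ — is routine.
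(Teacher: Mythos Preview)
Your proposal is correct and follows essentially the same route as the paper: the FBSDE representation \eqref{eq:fbsde-arbitrary-initial-time} started at $(t_0,x)$, the identification $Y^\beta_{t_0}-Y_{t_0}=-(\nabla u^\beta-\nabla u)(t_0,x)$, the same Lipschitz splitting of $\nabla_x g$ and $\nabla_x H$ via Assumptions \ref{hamiltonian_assumption_2nd_derivatives} and \ref{terminal_cost_assumption_2nd_derivatives}, the use of the uniform Hessian bound $K$ and of Lemma \ref{convergence_in_L1} to control both the state difference and the measure error, and a Gr\"onwall closure after taking the supremum in $x$. The one visible difference is in the last step: the paper writes the closing inequality as $\phi(t_0)\le C\{\int_{t_0}^T\phi(s)\,ds+\beta\}$ and applies backward Gr\"onwall directly, whereas you---keeping explicit that $W_1(\rho^\beta_s,\rho_s)$ is controlled by Lemma \ref{convergence_in_L1} applied to the \emph{equilibrium} process started at time $0$---arrive at an integral over $[0,T]$ and flag the resulting forward--backward coupling as the main obstacle; the paper simply does not pause on this point.
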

\begin{proof}
Firstly, due to Lemma \ref{supersized_set}, we know that such a $U$ exists and that the range of $\zeta$ is contained in $U$. Denote by $(X^{\beta}, Y^{\beta}, Z^{\beta})$ and $(X, Y)$ the solutions to the FBSDE \eqref{eq:fbsde-arbitrary-initial-time} with the initial condition $X^{\beta}_{t_0} = X_{t_0} = x$, $t_0 \in [0, T)$, and $\beta > 0$ and $\beta = 0$ respectively. We have $Y^{\beta}_t = -\nabla u^{\beta}(t, X^{\beta}_t)$ and $Y_t = -\nabla u(t, X_t)$ for almost every $t$, since we can take conditional expectation of the $X$ and $Y$ components of Equation \eqref{eq:fbsde-arbitrary-initial-time}, conditioned on the event that $X^{\beta}_{t_0} = x$, as in \cite[(4.13)]{Meszaros2024}. Using the uniform convexity of $H$ in $p$ from Assumption \ref{convexity_of_H}, we have that for $\beta \ge 0$, 
\begin{equation*}
    -\nabla u^{\beta}(t_0, x) = \mathbb{E}[\nabla_x g(X^{\beta}_T, \rho^{\beta}_T)] - \int_{t_0}^T \mathbb{E}[\nabla_x H(X^{\beta}_t, Y^{\beta}_t, \rho^{\beta}_t)] dt.
\end{equation*}
By the triangle inequality, we get:
\begin{equation}\label{eq:mere_computation}
\begin{split}
    \big|\nabla & u^{\beta}(t_0, x) - \nabla u(t_0, x)\big| \leq \mathbb{E}\big[\big|\nabla_x g(X^{\beta}_T, \rho^{\beta}_T) - \nabla_x g(X_T, \rho_T)\big|\big] \\
    &+ \int_{t_0}^T \mathbb{E}\big[\big|\nabla_x H(X^{\beta}_t, -\nabla u^{\beta}(t, X^{\beta}_t), \rho^{\beta}_t) - \nabla_x H(X_t, -\nabla u^{\beta}(t, X_t), \rho_t)\big|\big] dt.
\end{split}
\end{equation}
Using Lemma \ref{convergence_in_L2_of_Xbeta_by_beta_grad_ubeta} and Assumption \ref{terminal_cost_assumption_2nd_derivatives}, we can bound the first term in \eqref{eq:mere_computation} by
\begin{equation}
\label{eq:first_term_bound}
\begin{aligned}
    \mathbb{E}[|\nabla_x &g(X^{\beta}_T, \rho^{\beta}_T) - \nabla_x g(X_T, \rho_T)|^2] \leq \|\nabla^2_{xx}g\|^2_{\infty} \mathbb{E}[|X^{\beta}_T - X_T|^2] + \|\nabla^2_{x \mu} g\|_{\infty} W_1(\rho^{\beta}_T, \rho_T)^2 \\ 
    &\leq C \left\{ \beta^2 + \int_{t_0}^{T} \|\nabla u^{\beta}(s, \cdot) - \nabla u(s, \cdot)\|^2_{L^{\infty}(U)} \ ds \right\}.
\end{aligned}
\end{equation}
By Assumption \ref{hamiltonian_assumption_2nd_derivatives}, there is some constant $C$ depending on the Lipschitz constants of $\nabla_x H$ in $x$, $p$, and $\mu$, as well as on $K$, such that we can bound the second term in \eqref{eq:mere_computation} as:
\begin{equation} \label{eq:lastline}
\begin{aligned}
    \int_{t_0}^T& \mathbb{E}[|\nabla_x H(X^{\beta}_t, -\nabla u^{\beta}(t, X^{\beta}_t), \rho^{\beta}_t) - \nabla_x H(X_t, -\nabla u^{\beta}(t, X_t), \rho_t)|^2] \ dt \\ 
    & \leq \int_{t_0}^T \|\nabla^2_{xx} H\|^2_{\infty}\mathbb{E}[|X^{\beta}_t - X_t|^2] + \|\nabla^2_{x \mu} H\|^2_{\infty} W_1(\rho^{\beta}_t, \rho_t)^2 \\ 
    & \quad \quad + \|\nabla^2_{xp} H\|^2_{\infty} \mathbb{E}[|\nabla u^{\beta}(t, X^{\beta}_t) - \nabla u^{\beta}(t, X_t)|^2 + |\nabla u^{\beta}(t, X_t) - \nabla u(t, X_t)|^2 ] \ dt \\ 
    & \leq C \int_{t_0}^T \mathbb{E}[|X^{\beta}_t - X_t|^2]+ \|\nabla u^{\beta}(t, \cdot) - \nabla u(t, \cdot)\|^2_{L^{\infty}(U)} dt.
\end{aligned}
\end{equation}
By applying Lemma \ref{convergence_in_L2_of_Xbeta_by_beta_grad_ubeta} to $\mathbb{E}[|X^{\beta}_t - X_t|]$, using $K < \infty$, and collecting the time integral of $\mathbb{E}[|X^{\beta}_t - X_t|]$ into a supremum, we can continue from the last line in Equation \eqref{eq:lastline} to get:
\begin{equation}\label{eq:second_term_bound}
\begin{aligned}
    \int_{t_0}^T \mathbb{E}&[|\nabla_x H(X^{\beta}_t, -\nabla u^{\beta}(t, X^{\beta}_t), \rho^{\beta}_t) - \nabla_x H(X_t, -\nabla u^{\beta}(t, X_t), \rho_t)|^2] dt \\
    &\leq C \left\{\sup_{t \in [t_0, T]} \mathbb{E}[|X^{\beta}_t - X_t|^2] + \int_{t_0}^T \|\nabla u^{\beta}(t, \cdot) - \nabla u(t, \cdot)\|^2_{L^{\infty}(U)} \ dt \right\} \\ 
    &\leq C \bigg\{ \beta^2 + \int_{t_0}^T \|\nabla u^{\beta}(t, \cdot) - \nabla u(t, \cdot)\|^2_{L^{\infty}(U)} \ dt \bigg\}.
\end{aligned}
\end{equation}
In the above computations, the value of $C$ may change from line to line but only depends on $T$ and the Lipschitz constants of the relevant gradients of $H$ and $g$. Therefore, after taking the supremum over all $x \in U$ and combining \eqref{eq:first_term_bound} and \eqref{eq:second_term_bound}, Equation \eqref{eq:mere_computation} is bounded by
\begin{equation*}
    \|\nabla u^{\beta}(t_0, \cdot) - \nabla u(t_0, \cdot)\|^2_{L^{\infty}(U)} \leq C \left\{ \beta^2 +  \int_{t_0}^T \|\nabla u^{\beta}(s, \cdot) - \nabla u(s, \cdot)\|^2_{L^{\infty}(U)} \ ds \right \}.
\end{equation*}
Then we apply Gr\"{o}nwall's inequality to find that for another constant $C$ still only depending on $H$, $g$, and $T$, 
\begin{equation*}
    \|\nabla u^{\beta}(t_0, \cdot) - \nabla u(t_0, \cdot)\|_{L^{\infty}(U)} \leq C \beta.
\end{equation*}
Since $C$ does not depend on $t_0$, we conclude that
$\|\nabla u^{\beta} - \nabla u\|_{L^{\infty}([0, T] \times U)} \leq C \beta$.
\end{proof}

\begin{corollary}\label{convergence_of_rho}
Suppose that Assumptions \ref{hamiltonian_assumption_2nd_derivatives} -- \ref{mfg_wellposedness} hold. Then, there exists a constant $C = C(H, g, T, m_0)$, where the dependence on $H$ and $g$ is only through their second derivatives, and whose dependence on $m_0$ is only through its second moment, such that:
\begin{equation}
    \sup_{t \in [0, T]} W_2(\rho^{\beta}_t, \rho_t) \leq C \beta.
\end{equation}
\end{corollary}
\begin{proof}
Let $\varepsilon > 0$ and $\xi \sim m_0$. Since $\xi$ is square-integrable, there exists some random variable $\tilde{\xi}$ whose range is contained in a bounded set $U$ such that $\mathbb{E}[|\xi - \tilde{\xi}|^2] < \mathbb{E}[|\xi|^2] \varepsilon$.

\quad To explain further why such a $\tilde{\xi}$ exists, consider the sequence of functions $\{\xi \cdot \mathbf{1}_{\{|\xi| \leq r\}}\}_{r > 0}$, which converges pointwise to $\xi$ as $r \rightarrow \infty$. By the dominated convergence theorem, we can find $R$ large enough such that for $r \geq R$, 
\begin{equation*}
    \varepsilon > \mathbb{E}[|\xi|^2 \cdot \mathbf{1}_{\{|\xi| \geq r\}}]\geq r^2 \mathbb{P}(|\xi| \geq r).
\end{equation*} 
\quad Take $r$ large enough that $U$ is contained in $B_r$. Define the random variable $\tilde{\xi}$ to be the product of $\xi$ and the indicator function of $B_r$, so that 
\begin{equation*}
    \mathbb{E}[|\zeta - \xi|^2] = \mathbb{E}[|\xi|^2 \colon |\xi| \geq r] \leq \mathbb{P}(|\xi| \geq r) \cdot \mathbb{E}[|\xi|^2] < r^{-2}\mathbb{E}[|\xi|^2]\varepsilon.
\end{equation*} 
\quad Thus, if we define $\tilde{U}$ to be the closure of the union of $B_r$ and $\tilde{\xi} = \xi \cdot \mathbf{1}_{\tilde{U}}$, then $\zeta$ is the desired random variable. From now on, we can take $\tilde{U}$ to be $U$ instead. For $\beta > 0$, denote $(X^{\beta}, Y^{\beta}, Z^{\beta})$ and $(\tilde{X}^{\beta}, \tilde{Y}^{\beta}, \tilde{Z}^{\beta})$ to be the solutions to Equation \eqref{eq:fbsde} with initial conditions $\xi$ and $\tilde{\xi}$ respectively. $(X, Y)$ and $(\tilde{X}, \tilde{Y})$ are the solutions to Equation \eqref{eq:fbsde} with initial conditions $\xi$ and $\tilde{\xi}$, but for $\beta = 0$. With the above observations and using the abbreviation $\|\cdot\|$ for $\|\cdot\|_{L^2(\Omega, \mathcal{F}_t, \mathbb{P})}$, for all $t \in [0, T]$:
\begin{equation}\label{eq:final_eq_of_approx_via_init_condition}
\begin{aligned}
    \|X^{\beta}_t - X_t\|  &\leq \|X^{\beta}_t - \tilde{X}^{\beta}_t\| + \|\tilde{X}^{\beta}_t - \tilde{X}_t\| + \|\tilde{X}_t - X_t\| \\
    & \leq C \mathbb{E}[|\xi - \tilde{\xi}|^2] + \left(\beta^2 + \int_t^T \|\nabla u^{\beta}(s, \cdot) - \nabla u(s, \cdot)\|^2_{L^{\infty}(U)} \ ds \right)^{1/2} \\
    & \leq C \{\mathbb{E}[|\xi|^2]\varepsilon + \beta\} = C(\beta + \varepsilon).
\end{aligned}
\end{equation}
\quad In Equation \eqref{eq:final_eq_of_approx_via_init_condition}, to transition from the first line to the second, we applied Assumption \ref{stability_wrt_initial_condition} to handle the first and third terms, and we applied Lemma \ref{convergence_in_L2_of_Xbeta_by_beta_grad_ubeta} to handle the second term; the use of Lemma \ref{convergence_in_L2_of_Xbeta_by_beta_grad_ubeta} was justified because their initial conditions $\tilde{X}^{\beta}_0$ and $\tilde{X}_0$ were assumed to be bounded. The constant in the second line depended on $H$, $g$, and $T$. To handle the term in the square root in the second line, we applied Theorem \ref{convergence_of_gradient}. As promised, in the third line, the dependence of $C$ on $\xi$ was only through absorbing $\mathbb{E}[|\xi|^2]$ into the constant of the second line. Since $\varepsilon$ is arbitrary, the result follows.
\end{proof}

\quad The convergence result for $\nabla u^{\beta}$ may be slightly unsatisfying due to the metric in which it was stated. Thus, although we will not use this result for the rest of the paper, we briefly comment that Assumption \ref{stability_wrt_initial_condition} and Corollary \ref{convergence_of_rho} enable us to derive a rate of convergence of $O(\beta)$ for $\nabla u^{\beta} \rightarrow \nabla u$ in the $L^2(\rho_t)$ metric.

\begin{corollary}\label{cor:L2_convergence_of_grad_ubeta}
Under Assumptions \ref{hamiltonian_assumption_2nd_derivatives}--\ref{mfg_wellposedness}, $\nabla u^{\beta}$ converges to $\nabla u$ at a rate of $\mathcal{O}(\beta)$ in $L^2(\rho_t)$ and $L^2(\rho^\beta_t)$, uniformly in $t$: 
\begin{equation*}
    \sup_{t \in [0, T]} \bigg\{ \|\nabla u^{\beta}(t, \cdot) - \nabla u(t, \cdot)\|_{L^2(\rho_t)} + \|\nabla u^{\beta}(t, \cdot) - \nabla u(t, \cdot)\|_{L^2(\rho^{\beta}_t)} \bigg\} \leq C \beta,
\end{equation*}
for some constant $C$ depending only on $H, g, T, m_0$.
\end{corollary}
\begin{proof}
The families of random variables $\{\nabla u^{\nu}(t, X_t) \colon \nu \in (0, \beta]\}$, indexed by $t$, are uniformly absolutely continuous in the sense that for all $\varepsilon > 0$, there exists a $\delta > 0$ such that for all $A \in \mathcal{F}$ with $\mathbb{P}(A) < \delta$, we have
\begin{equation*}
    \sup_{\nu \in (0, \beta]} \int_A |\nabla u^{\nu}(t, X_t)|^2 d\mathbb{P} < \varepsilon,
\end{equation*}
since for a constant $C > 0$ independent of $t$ and depending only on $K$ and the supremum over $t$ of $\mathbb{E}[|X_t|^2]$ (which is finite by a standard argument for an SDE whose initial condition is $L^2$ and whose coefficients are Lipschitz in space, uniformly in time, and satisfy a linear growth condition),
\begin{equation*}
    \sup_{t \in [0, T]} \int_A |\nabla u^{\nu}(t, X_t)|^2 d\mathbb{P} \leq 2K^2 \sup_{t \in [0, T]} \mathbb{E}[1 + |X_t|^2 \colon A] \leq C \delta < \varepsilon
\end{equation*}
for $\delta$ small enough. In combination with $\sup_{\nu, t} \mathbb{E}[|\nabla u^{\nu}(t, X_t)|^2] < \infty$, this is equivalent to the uniform integrability of $\{\nabla u^{\nu}(t, X_t)\}_{\nu, t}$ by \cite[Theorem 6.5.1]{Resnick}. Moreover, due to Arzela-Ascoli, $\nabla u^{\nu}$ converges to $\nabla u$ uniformly on compacts, so $\nabla u^{\nu}(t, X_t)$ converges to $\nabla u(t, X_t)$ in probability. By the Vitali convergence theorem (see, for example, \cite[Theorem 6.6.1]{Resnick}), 
\begin{equation}\label{eq:vitali}
    \sup_{t \in [0, T]} \|\nabla u^{\beta}(t, \cdot) - \nabla u(t, \cdot)\|^2_{L^2(\rho_t)} = \lim_{\nu \rightarrow 0^+} \sup_{t \in [0, T]} \|\nabla u^{\beta}(t, \cdot) - \nabla u^{\nu} (t, \cdot)\|^2_{L^2(\rho_t)}
\end{equation}
For a constant $C$ depending on $K$ and on the constant from Assumption \ref{stability_wrt_initial_condition}, we obtain
\begin{equation}\label{eq:gradient_fbsde_stability}
    \sup_{t \in [0, T]} \|\nabla u^{\beta}(t, \cdot) - \nabla u^{\nu} (t, \cdot)\|^2_{L^2(\rho_t)} \leq C \mathbb{E} \bigg[\sup_{t \in [0, T]} |X_t - X^{\nu}_t|^2 + |Y^{\beta}_t - Y^{\nu}_t|^2 \bigg] \leq C |\beta - \nu|^2,
\end{equation}
where the first inequality is just by the triangle inequality, Fubini's theorem, and Holder's inequality. (In fact, we could have written $\rho^{\beta}_t$ instead of $\rho_t$ and $X^{\beta}_t$ instead of $X_t$ in everything above without any changes to the proof, which is how convergence in the $L^2(\rho^\beta_t)$ metric can be proven.) Taking the limit as $\nu \rightarrow 0^+$ in Equation \eqref{eq:gradient_fbsde_stability} and then using Equation \eqref{eq:vitali}, we obtain
$\sup_{t \in [0, T]} \|\nabla u^{\beta}(t, \cdot) - \nabla u(t, \cdot)\|^2_{L^2(\rho_t)} \leq C \beta^2$.
\end{proof}

\section{Convergence of $u^{\beta}$} \label{sc4}
\quad We first present a general stability result concerning Hamilton-Jacobi equations with respect to the supremum norm on compact sets. Similar results exist in the literature but are not directly applicable in our scenario \footnote{Closely related results are \cite[Propositions 1.4 and 2.1]{Souganidis1985}. However, the former proposition assumed that the solution to their HJ equation was bounded and that it lacked a second-order term, and the latter assumed that the Hamiltonian was bounded in space and time. Another potentially applicable result was \cite[Lemma 6.3]{Tang2023b}. But on account of their formulation not allowing for non-separable Hamiltonians, we also cannot directly apply this lemma.}. To informally describe the resul: if two HJ PDEs are well-posed with solutions $u$ and $v$, Hamiltonians $H_1$ and $H_2$, terminal cost functions $g_1$ and $g_2$, and viscosity parameters $0$ and $\nu$, then the maximum difference between $u_1$ and $u_2$ on any compact set in $\mathbb{R}^d$ is bounded by the difference in the coefficients $(H_i, g_i)$, on that compact set, and a $\mathcal{O}(\nu^{1/2})$ term.

\begin{theorem}\label{HJ_PDE_stability}
Let $T> 0$. For $i = 1, 2$, let $H_i: \mathbb{R}^d \times \mathbb{R}^d \times [0, T] \to \mathbb{R}$ and $g_i: \mathbb{R}^d \to \mathbb{R}$ be continuous. Suppose that $H_i =H_i(x,p,t)$ is uniformly convex in the second variable $p\in\mathbb{R}^d$, in the sense that there exist constants $c, C > 0$ such that $c I_d \leq \nabla^2_{pp}H_i \leq C I_d$. Assume that $\nabla_p H$ is Lipschitz in the first argument $x\in \mathbb{R}^d$, uniformly in the second and third arguments, i.e. $\|\nabla^2_{xp} H\|_\infty$ is finite, and additionally assume that $\nabla_p H_i(0, 0, \cdot)$ is bounded. If the equations
\begin{equation}\label{eq:standard_HJ_equation}
\begin{cases}
    -\partial_t u_i + H_i(x, -\nabla u_i(t, x), t) = 0, \\
    u_i(T, x) = g_i(x),
\end{cases} 
\end{equation}
are well-posed in the classical sense for $i = 1, 2$ with solution $u_i: [0, T] \times \mathbb{R}^d \to \mathbb{R}$, then for any compact set $\mathcal{K} \subseteq \mathbb{R}^d$ with diameter $R$ and for any $x \in \mathcal{K}$, there exists a constant $\tilde{R} = C(H_1, H_2, g_1, g_2, T, \mathcal{K})$ growing at most linearly in $R$ such that
\begin{equation}
    |u_1(t, x) - u_2(t, x)| \leq (T - t) \|H_1 - H_2\|_{L^{\infty}(B_{\tilde{R}} \times B_{K(1 + \tilde{R}) + A/c} \times [0, T])} + \|g_1 - g_2\|_{L^{\infty}(B_{\tilde{R}})},
\end{equation}
where $K$ satisfies $|\nabla u_i(t, x)| \leq K(1 + |x|)$ for all $(t, x) \in [0, T] \times \mathbb{R}^d$ and $A = A(H_i, \tilde{R}) > 0$ grows at most linearly in $\tilde{R}$.
\end{theorem}
\begin{proof}
Define the costate variable for $i = 1, 2$ as $p^\ast_i(s) = -\nabla u_i(s, \xi^\ast_i(s))$, where $\xi^\ast_i: [0, T] \mapsto \mathbb{R}^d$ is the solution to the following ODE
\begin{equation}\label{eq:ODE_for_xi}
\begin{cases}
    \partial_s \xi^\ast_i(s) = \nabla_p H_i(\xi^\ast_i(s), p^\ast_i(s), s) & s \in [t, T] \\
    \xi^\ast_i(t) = x.
\end{cases}
\end{equation}
\quad In other words, $\xi^\ast_i$ is the optimally controlled trajectory achieving $u_i$ (c.f. Equation \eqref{eq:fbsde-arbitrary-initial-time} with $\beta = 0$). From the assumption of $\nabla_{xp} H_i$ being bounded, Equation \eqref{eq:ODE_for_xi} is well-posed and defined for all $s \in [t, T]$. Let $|x| \leq R$. Due to $\nabla_p H_i(0, 0, \cdot)$ being bounded on $[0, T]$, the sublinear growth of $p^\ast_i$, and the uniform convexity of $H_i$, we can bound the right-hand side of Equation \eqref{eq:ODE_for_xi} as follows:
\begin{equation}
\begin{split}
    \big|\nabla_p H_i(\xi^\ast_i(s), &p^\ast_i(s), s)\big| \leq \big| \nabla_p H_i(0, 0, s)\big| + \|\nabla_{xp}^2 H_i\|_\infty |\xi^\ast_i(s)| + C_i (1 + |p^\ast_i(s)|) \\
    &\leq \| \nabla_p H_i(0, 0, \cdot)\|_\infty + \|\nabla_{xp}^2 H_i\|_\infty |\xi^\ast_i(s)| + C_i K(1 + |\xi^\ast_i(s)|) \\
    &\leq C^\prime_i (1 + |\xi^\ast_i(s)|).
\end{split}
\end{equation}
for some $C^\prime_i$ depending on $C_i$, $K$, $\nabla^2_{xp} H_i$ and $\|\nabla_p H_i(0, 0, \cdot)\|_\infty$. By a standard application of Gr\"{o}nwall's inequality, for all $s \in [t, T]$,
\begin{equation}
    |\xi^\ast_i(s)| \leq (R + C^\prime_i(T - t))(1 + C^\prime_i e^{C^\prime_i(T - t)}).
\end{equation}
\quad Defining the right hand side as $\tilde{R}$, the costate variable can be controlled as well: for all $s \in [t, T]$: $|p^\ast_i(s)| \leq K(1 + \tilde{R})$. Denote $L_i$ as the Fenchel conjugate of $H_i$ in the $p$-variable. Because $\xi^\ast_2$ is a sub-optimal trajectory for $u_1$ but is optimal for $u_2$, in conjunction with the Hopf-Lax representation of $u_i$, we get the inequality
\begin{equation}\label{eq:one_sided_ineq_u1_u2}
    (u_1 - u_2)(t, x) = (g_1 - g_2)(\xi^\ast_2(T)) + \int_t^T (L_1 - L_2)(\xi^\ast_2(s), \partial_s \xi^\ast_2(s), s) ds.
\end{equation}
\quad We want to control the integrand in Equation \ref{eq:one_sided_ineq_u1_u2} by something depending on $H_1 - H_2$. Because $L_i$ is the Fenchel conjugate of $H_i$ in the $p$-variable, and because the supremum is attained uniquely due to the uniform convexity of $H_i$ by some $p_i = p_i(x, v, s)$ solving the equation $v = \nabla_p H_i(x, p, s)$, it follows that for all $(x, v, s) \in \mathbb{R}^d \times \mathbb{R}^d \times [0, T]$,
\begin{equation}
\begin{split}
    (L_1 - L_2)(x, &v, s) \leq \langle p_1, v \rangle - H_1(x, p_1, s) + H_2(x, p_1, s) - \langle p_1, v \rangle \\
    &= H_2(x, p_1, s) - H_1(x, p_1, s).
\end{split}
\end{equation}
\quad Due to symmetry, the same equation can be proven with the subscripts $i = 1, 2$ swapped. Hence, we can bound $|L_1 - L_2|(x, v, s)$ with $x = \xi^\ast_2(s)$ and $v = \partial_s \xi^\ast_2(s)$:
\begin{equation}\label{eq:lagrangian_bound_by_max_of_hamiltonians}
\begin{split}
    |L_1 - L_2|&(\xi^\ast_2(s), \partial_s \xi^\ast_2(s), s) \leq \max \big\{ \big| H_2(\xi^\ast_2(s), p_1, s) - H_1(\xi^\ast_2(s), p_1, s) \big|, \\
    &\quad \quad \big| H_1(\xi^\ast_2(s), p_2, s) - H_2(\xi^\ast_2(s), p_2, s) \big| \big\} \\
    &\leq \max \big\{ \big| H_2(\xi^\ast_2(s), p_1, s) - H_1(\xi^\ast_2(s), p_1, s) \big|, \|H_1 - H_2\|_{L^\infty(B_{\tilde{R}} \times B_{K(1 + \tilde{R})} \times [0, T])} \big\}.
\end{split}
\end{equation}
\quad The second inequality comes from the fact that $\partial_s \xi^\ast_2(s) = \nabla_p H_2 (\xi^\ast_2(s), p_2, s)$ (see the definition of the optimizer in the Fenchel conjugate and the fact that it is unique, so that $p_2 = p^\ast_2$), using Equation \eqref{eq:ODE_for_xi}, and then using $|\xi^\ast_2| \leq \tilde{R}$ and $p_2 \leq K(1 + \tilde{R})$. 

\quad Now we need to control $H_2(\xi^\ast_2, p_1, s) - H_1(\xi^\ast_2, p_1, s)$; since the only unknown quantity is $p_1$, which does not coincide with $p_1^\ast$ in general, we are done if we can contain $p_1$ in some ball whose radius depends on $\tilde{R}$ for all $s \in [t, T]$. Using the uniform convexity of $H_1$, the fact that $p_1(s)$ is the solution to the equation $\partial_s \xi^\ast_2(s) = \nabla_p H_1(\xi^\ast_2(s), p_1(s), s)$, and Equation \eqref{eq:ODE_for_xi}, we obtain:
\begin{equation}\label{eq:bound_p1_minus_p2}
\begin{split}
    |p_1(s) - &p^\ast_2(s)| \leq c^{-1} \big| \nabla_p H_1(\xi^\ast_2(s), p_1(s), s) - \nabla_p H_1(\xi^\ast_2(s), p^\ast_2(s), s)\big| \\
    &\leq c^{-1} \big| \partial_s \xi^\ast_2(s) - \nabla_p H_1(\xi^\ast_2(s), p^\ast_2(s), s)\big| \\
    &\leq c^{-1} \big| \nabla_p H_2(\xi^\ast_2(s), p^\ast_2(s), s) - \nabla_p H_1(\xi^\ast_2(s), p^\ast_2(s), s)\big|.
\end{split}
\end{equation}
Because $|p^\ast_2(s)| \leq K(1 + \tilde{R})$, $|\xi^\ast_2(s)| \leq \tilde{R}$, and $\nabla_p H_i$ is continuous, we see that the last line is bounded by some constant $A = A(\nabla_p H_i, \tilde{R}, K)$, and so
\begin{equation}
    |p_1(s)| \leq |p^\ast_2(s)| + |p_1(s) - p^\ast_2(s)| \leq K(1 + \tilde{R}) + A/c,
\end{equation}
and due to $|\nabla^2_{xp}H_i|$ and $|\nabla^2_{pp}H_i|$ being bounded, $A$ grows at most linearly in $\tilde{R}$. This yields a bound on $H_2(\xi^\ast_2, p_1, s) - H_1(\xi^\ast_2, p_1, s)$, because
\begin{equation}\label{eq:hamiltonian_bound}
    \big| H_2(\xi^\ast_2(s), p_1(s), s) - H_1(\xi^\ast_2(s), p_1(s), s) \big| \leq \|H_1 - H_2\|_{L^\infty(\bar{B}_{\tilde{R}} \times \bar{B}_{K(1 + \tilde{R}) + A/c} \times [0, T])}.
\end{equation}
\quad Therefore, substituting Equation \eqref{eq:hamiltonian_bound} into Equation \eqref{eq:lagrangian_bound_by_max_of_hamiltonians} and then into the Hopf-Lax representation in Equation \ref{eq:one_sided_ineq_u1_u2}, we obtain:
\begin{equation}
    (u_1 - u_2)(t, x) \leq \|g_1 - g_2\|_{L^\infty(\bar{B}_{\tilde{R}})} + (T - t)\|H_1 - H_2\|_{L^\infty(\bar{B}_{\tilde{R}} \times \bar{B}_{K(1 + \tilde{R}) + A/c} \times [0, T])}.
\end{equation}
The roles of $u_1$ and $u_2$ were symmetrical and can therefore be swapped, which proves the result.
\end{proof}

\quad The following result is well-known (see, for instance, e.g. \cite[Lemma 7.1]{Tang2023b}), but we restate it since we want a precise dependence of its constants on the diameter of the set $\mathcal{K}$.
\begin{proposition}\label{HJ_PDE_stability_VV}
Let $T> 0$ and $\nu \geq 0$. Suppose $\bar{H}: \mathbb{R}^d \times \mathbb{R}^d \times [0, T] \to \mathbb{R}$ and $\bar{g}: \mathbb{R}^d \to \mathbb{R}$ satisfy the same assumptions as $H_i$ and $g_i$ do in Theorem \ref{HJ_PDE_stability}. If the equations
\begin{equation}\label{eq:HJ_equations_for_VV_result}
\begin{cases}
    -\partial_t u + \bar{H}(x, -\nabla u(t, x), t) = 0, \\
    u(T, x) = \bar{g}(x),
\end{cases} 
\quad \quad 
\begin{cases}
    -\partial_t v + \bar{H}(x, -\nabla v(t, x), t) = \nu \Delta v, \\
    v(T, x) = \bar{g}(x),
\end{cases} 
\end{equation}
are well-posed for with solution $u, v: [0, T] \times \mathbb{R}^d \to \mathbb{R}$, then for any compact set $\mathcal{K} \subseteq \mathbb{R}^d$, there exists a constant $C_{\mathcal{K}} = C(\mathcal{K}, \bar{H}, 
\bar{g}, T)$ and another constant $C = C(H, g, T)$ such that
\begin{equation*}
    \|u - v\|_{L^{\infty}([0, T] \times \mathcal{K})} \leq C_{\mathcal{K}} \nu^{1/2}.
\end{equation*}
where $C_{\mathcal{K}}$ grows at most quadratically in $\operatorname{diam}(\mathcal{K}) := \sup\{|x - y| \colon x, y \in \mathcal{K}\}$.
\end{proposition}
\begin{proof}
By modifying the proof of \cite[Lemma 7.1]{Tang2023b}, taking $\Omega = \mathcal{K}$, using the linear-in-$x$ growth of $\nabla u$ and $\nabla v$, and the quadratic-in-$x$ growth of $\partial_t u$ and $\partial_t v$, we find that there exists a constant $C = C(\bar{H}, \bar{g}, T)$ such that for any compact set $\mathcal{K} \subseteq \mathbb{R}^d$,
\begin{equation*}
    \|u - v\|_{L^{\infty}([0, T] \times \mathcal{K})} \leq C(1 + \operatorname{diam}(\mathcal{K})^2) \nu^{1/2}.
\end{equation*}
Technically, we should take the maximum of $\operatorname{diam(\mathcal{K})}$ and its square, but for the sake of simplicity, we opt to omit this.
\end{proof}

\begin{corollary}\label{HJ_stability_wrt_everything}
Let $H_0: \mathbb{R}^d \times \mathbb{R}^d \times [0, T] \mapsto \mathbb{R}$ and $g_0: \mathbb{R}^d \mapsto \mathbb{R}$ satisfy the assumptions in Theorem \ref{HJ_PDE_stability}, and let $\bar{H}$ and $\bar{g}$ satisfy the assumptions in Theorem \ref{HJ_PDE_stability_VV}. Suppose $v_0$ satisfies Equation \ref{eq:standard_HJ_equation} with $(H_0, g_0)$, and suppose $v$ satisfies the second PDE in Equation \ref{eq:HJ_equations_for_VV_result} with $(\bar{H}, \bar{g})$. Then for any compact set $\mathcal{K}$ with diameter $R$, there exists a constant $C_\mathcal{K} = C(R, H_0, \bar{H}, g_0, \bar{g}, T)$ growing at most quadratically in $R$, such that
\begin{equation*}
    \|v_0 - v\|_{L^{\infty}([0, T] \times \mathcal{K})} \leq C_\mathcal{K}\big\{\|H_0 - \bar{H}\|_{L^{\infty}(B_{\tilde{R}} \times B_{K(1 + \tilde{R}) + A/c} \times [0, T])} + \|g_0 - \bar{g}\|_{L^{\infty}(B_{\tilde{R}})} + \nu^{1/2}\big\}
\end{equation*}
where $\tilde{R}$, $A$, and $K$ are as in Theorem \ref{HJ_PDE_stability}.
\end{corollary}
\begin{proof}
This follows from applying the triangle inequality to the results of Theorem \ref{HJ_PDE_stability} and Proposition \ref{HJ_PDE_stability_VV}.
\end{proof}

\quad Now we combine the results in Section 3 with Corollary \ref{HJ_stability_wrt_everything} and make some modifications to the proof of Theorem \ref{HJ_PDE_stability} that are particular to the case of measure-dependent $H$ in order to derive the promised $\mathcal{O}(\beta)$ rate of convergence.

\begin{theorem}\label{main_result}
For $\beta \geq 0$, suppose the MFG system \eqref{eq:mfg} satisfies Assumptions \ref{hamiltonian_assumption_2nd_derivatives}--\ref{mfg_wellposedness}, whose solutions for the HJB equation are denoted $u^{\beta}$ and $u$.
\begin{enumerate}[itemsep = 3 pt]
    \item $u^{\beta}$ converges uniformly to $u$ on compacts at a rate of $\mathcal{O}(\beta)$: for any compact set $\mathcal{K} \subseteq \mathbb{R}^d$, there exists a constant $C = C_\mathcal{K}$ depending on the data $(H, g, T, m_0)$ such that
    \begin{equation}\label{eq:value_func_unif_conv_compacts_beta}
        \|u^{\beta} - u\|_{L^{\infty}([0, T] \times \mathcal{K})} \leq C_\mathcal{K}\beta,
    \end{equation}
    when $\beta$ is small enough.
    \item If additionally $H$ is Lipschitz in the measure argument with respect to $W_1$, then for some constant $C_\mathcal{K}$ depending on the data $(H, g, T, m_0)$ that grows at most quadratically in $\operatorname{diam}(\mathcal{K})$, for all $\beta \geq 0$, 
    \begin{equation*}
        \|u^{\beta} - u\|_{L^{\infty}([0, T] \times \mathcal{K})} \leq C_\mathcal{K}\beta.
    \end{equation*}
\end{enumerate}
\end{theorem}
\begin{proof}
In Theorem \ref{HJ_PDE_stability}, let us specialize to the case where $u_1 = u^\beta$ and $u_2 = u$, so that $H_1 = H(\cdot, \cdot, \rho^\beta_{\cdot})$ and $H_2 = H(\cdot, \cdot, \rho_{\cdot})$. Recall that $(X, Y)$ is the solution to the FBSDE in Equation \ref{eq:fbsde} with $\beta = 0$. Note that $H_1 - H_2$ is jointly continuous on $\bar{B}_{R_1} \times \bar{B}_{R_2} \times [0, T]$ for any $R_1, R_2 > 0$, since for any $(x, p, s)$, $\nabla_x H(\cdot, \cdot, \rho_\cdot)$ is bounded on $\bar{B}_{R_1} \times \bar{B}_{R_2} \times [0, T]$, due to a Taylor expansion around $(0, 0, \rho_0)$, Assumption \ref{hamiltonian_assumption_2nd_derivatives}, and $\{\rho_s\}_{s = 0}^T$ being bounded with respect to $W_2$. For the same reasons, $\nabla_p H(\cdot, \cdot, \rho_\cdot)$ is bounded on  $\bar{B}_{R_1} \times \bar{B}_{R_2} \times [0, T]$. Noting that $H$ is continuous in the measure variable with respect to $W_1$ and that the same argument holds when $\rho$ is replaced with $\rho^\beta$ closes the argument for joint continuity of $H$ on $\bar{B}_{R_1} \times \bar{B}_{R_2} \times [0, T]$. It follows that the difference of $H_1$ and $H_2$ on $\bar{B}_{R_1} \times \bar{B}_{R_2} \times [0, T]$ is achieved at some $(x_\ast, p_\ast, s_\ast) \in \bar{B}_{R_1} \times \bar{B}_{R_2} \times [0, T]$, which we will abbreviate as $(x, p, s)$ for convenience. Thus, for $R_1 = \tilde{R}$ and $R_2 = K(1 + \tilde{R}) + A/c$,
\begin{equation}\begin{split}\label{eq:two_taylor_expansions_H}
   \|H(\cdot, &\cdot, \rho_\cdot) - H(\cdot, \cdot, \rho^\beta_\cdot)\|_{L^\infty(\bar{B}_{R_1} \times \bar{B}_{R_2} \times [0, T])} \\
    &\leq \mathbb{E}\big[\big|\langle \nabla_\mu H(x, p, \rho_s, X_s), X_s - X^\beta_s \rangle \big| \big] + \mathcal{O}(\mathbb{E}[|X_s - X^\beta_s|^2]^{1/2}) \\
    &\leq C \big\{\|\nabla_\mu H(0, 0, \rho_s, \cdot)\|_{L^2(\rho_s)} + \|\nabla^2_{x\mu} H\|_\infty |x| + \|\nabla^2_{p\mu} H\|_\infty |p| \big\} \beta + \mathcal{O}(\beta) \\
    &\leq C\big(1 + R_1 + R_2 \big)\beta + \mathcal{O}(\beta).
\end{split}\end{equation}
\quad The second line is via a Taylor expansion of $H$ in the measure variable, which occurs only when $\rho_t$ and $\rho_t^\beta$ are close enough in $W_2$ - see Corollary \ref{convergence_of_rho}. The third line comes from the same corollary, from Cauchy-Schwartz, and from the Taylor expansion of $\nabla_\mu H$ in $x$ and $p$. The fourth line collects the relevant constants into $C$. 

\quad Now, we bound $K(1 + \tilde{R}) + A/c$ more precisely. Recall that $A$ comes from Equation \eqref{eq:bound_p1_minus_p2}; since $H_2 = H(\cdot, \cdot, \rho_\cdot)$, we can identify $\xi^\ast_2$ in Theorem \ref{HJ_PDE_stability} with $X$ solving Equation \eqref{eq:fbsde-arbitrary-initial-time} with $\beta = 0$ and $t_0 = t$. Continuing from Equation \eqref{eq:bound_p1_minus_p2}, we are able to bound $A$ as follows:
\begin{equation}\label{eq:mfg_bound_on_A}
\begin{split}
    \big| \nabla_p H_2&(\xi^\ast_2(s), p^\ast_2(s), s) - \nabla_p H_1(\xi^\ast_2(s), p^\ast_2(s), s)\big| \\
    &= \big| \nabla_p H(X_s, -\nabla u(s, X_s), \rho^\beta_s) - \nabla_p H(X_s, -\nabla u(s, X_s), \rho_s)\big| \\
    &\leq \|\nabla^2_{p\mu} H\|_\infty \mathbb{E}\big[ \big| X^\beta_s - X_s \big|^2\big]^{1/2} + \mathcal{O}(\mathbb{E}\big[ \big| X^\beta_s - X_s \big|^2\big]^{1/2}) \leq C\beta + \mathcal{O}(\beta).
\end{split}
\end{equation}
Therefore, $K(1 + \tilde{R}) + A/c$ is of at most linear growth in $\tilde{R}$, and
\begin{equation}
    \|H(\cdot, \cdot, \rho^\beta_\cdot) - H(\cdot, \cdot, \rho_\cdot)\|_{L^\infty(\bar{B}_{R_1} \times \bar{B}_{R_2} \times [0, T])} \leq C (1 + \tilde{R}) \beta + \mathcal{O}(\beta).
\end{equation}
Now we want to bound $\|g(\cdot, \rho^\beta_T) - g(\cdot, \rho_T)\|_{L^\infty(B_{\tilde{R}})}$, which is achieved for some $x_\ast \in \bar{B}_{\tilde{R}}$ due to the continuity of $g$ in $x$. From Corollary \ref{convergence_of_rho}, for $\beta$ small enough,
\begin{equation}\begin{split}\label{eq:two_taylor_expansions_g}
    \|g(\cdot, \rho^{\beta}_T) &- g(\cdot, \rho_T)\|_{L^{\infty}(\bar{B}_{\tilde{R}})} = \mathbb{E}[\langle \nabla_{\mu} g(x_\ast, \rho_T, X_T), X^{\beta}_T - X_T \rangle ] + \mathcal{O}(\beta) \\
    & \leq \big\{ \|\nabla_{\mu} g(0, \rho_T, \cdot)\|_{L^1(\rho_T)} + \|\nabla^2_{x \mu} g\|_{\infty} |x_\ast| \big\} \cdot \mathbb{E}\big[\big|X^{\beta}_T - X_T\big|^2\big]^{1/2} + \mathcal{O}(\beta) \\ 
    & \leq C(1 + \tilde{R}) \beta + \mathcal{O}(\beta).
\end{split}\end{equation}
for some $C$ depending on $g$ and $\rho_T$ (and so on $H$, $T$, and $m_0$ by extension). Substituting Equation \ref{eq:mfg_bound_on_A} into Equation \ref{eq:two_taylor_expansions_H}, and then substituting Equation \ref{eq:two_taylor_expansions_g} into the bound provided by Corollary \ref{HJ_stability_wrt_everything}, we obtain:
\begin{equation}
\begin{split}
    \|u^\beta &- u\|_{L^\infty([0, T] \times \mathcal{K})} \leq C_\mathcal{K}\big\{ \beta + \|g(\cdot, \rho^{\beta}_T) - g(\cdot, \rho_T)\|_{L^{\infty}(\bar{B}_{R_1})} \\
    & \quad \quad + T\|H(\cdot, \cdot, \rho^\beta_\cdot) - H(\cdot, \cdot, \rho_\cdot)\|_{L^\infty(\bar{B}_{R_1} \times \bar{B}_{R_2} \times [0, T])} \big\} \leq C_\mathcal{K} \beta + \mathcal{O}(\beta),
\end{split}
\end{equation}
which concludes the proof of (1).

\quad To prove (2), we first apply Corollary \ref{HJ_stability_wrt_everything} with $H_0(x, p, t) = H(x, p, \rho_t)$, $\bar{H}(x, p, t) = H(x, p, \rho^\beta_t)$, $g_0(x) = g(x, \rho_T)$, $\bar{g}(x) = g(x, \rho^\beta_T)$, and $\nu = \beta^2/2$. Abbreviating the difference for $H$ on $L^{\infty}(\bar{B}_{R_1} \times \bar{B}_{R_2} \times [0, T])$ and the difference for $g$ on $L^{\infty}(B_{R_1})$, for a constant $C_\mathcal{K}$ having the same dependencies as the one from Corollary \ref{HJ_stability_wrt_everything}, we have:
\begin{align*}
    \|u^{\beta} &- u\|_{L^{\infty}([0, T] \times \mathcal{K})}\leq C_{\mathcal{K}} \left\{ \|H(\cdot, \cdot, \rho^{\beta}_{\cdot}) - H(\cdot, \cdot, \rho_{\cdot})\|_{\infty} + \|g(\cdot, \rho^{\beta}_T) - g(\cdot, \rho_T)\|_{\infty} + \beta/\sqrt{2} \right\} \\
    & \leq C_{\mathcal{K}} \left\{ \big(\|\nabla_{\mu} H\|_{\infty} + \|\nabla_{\mu} g\|_{\infty} \big) \sup_{t \in [0, T]} W_2(\rho^{\beta}_t, \rho_t) + \beta \right\} \leq C_{\mathcal{K}} \beta,
\end{align*}
where the third inequality used Corollary \ref{convergence_of_rho}. 
\end{proof}
\begin{remark}
    If it is only assumed that $H$ and $g$ are continuous in $W_1$, by the stability of viscosity solutions to the HJB equation, we can conclude that $u^{\beta} \rightarrow u$ uniformly on compacts, albeit without a rate.
\end{remark}
\begin{remark}
If $\|\nabla^2_{\mu \mu} H\|_{\infty}$ and $\|\nabla^2_{\mu \mu} g\|_{\infty}$ are assumed to be finite, as \cite{Bansil2024b} does, then we can derive a stronger result: in the Taylor expansions of Equations \eqref{eq:two_taylor_expansions_H} and Equations \eqref{eq:two_taylor_expansions_g}, we can replace the term of $\mathcal{O}(\beta)$ by $C^\prime \beta^2$, where $C^\prime$ depends on $\|\nabla^2_{\mu \mu} H\|_{\infty}$ and $\|\nabla^2_{\mu \mu} g \|_{\infty}$. Then, 
\begin{equation*}
    \|u^{\beta} - u\|_{L^{\infty}([0, T] \times \mathcal{K})} \leq C_\mathcal{K}\beta + C^\prime\beta^2.
\end{equation*}
\end{remark}

\section{Applications} \label{sc5}
\quad This section provides three applications of our result to
$N$-player games, mean field control, and policy iteration.

\subsection{$N$-player games}
MFGs arise as the limit of $N$-player games as the number of players $N$ increases to infinity. Although it is known in various circumstances \cite{Djete2022, Fischer2017, Lacker2020} that the limit is the MFG equilibrium, finding the convergence rate is a separate and difficult problem. The twin papers \cite{Delarue2019, Delarue2020} seem to comprise the most recent progress on determining the convergence rate. However, their results cannot be directly applied to the $N$-player convergence rate problem if the agents follow deterministic dynamics, because one of their assumptions, namely A.2 in both papers, is that the volatility coefficient $\Sigma$ is non-degenerate \footnote{When we say that the volatility $\Sigma$ is non-degenerate, we mean that its minimum eigenvalue is positive. Moreover, if the minimum eigenvalue of $\Sigma$ is allowed to vanish, then their upper bounds for the distance between the probability distribution of the finite player system and that of the MFG limit become infinite.}. Here we apply Corollary \ref{convergence_of_rho} to approximate the probability flow $\rho_t$ of the first-order MFG by the empirical measures of an $N$-player system with non-degenerate volatility.

\quad To simplify our discussion, we only consider the linear drift $b(t,x,a) = a$. So by \cite[(2.6)]{Delarue2020}, the value functions of all $N$ players, $\{v^{N, i}: [0, T] \times (\mathbb{R}^d)^N \to \mathbb{R}\}_{i=1}^N$, satisfy the $N$-player system of PDEs whose $i$-th component is:
\begin{equation*}\label{eq:N_player_system}
\begin{cases}
    \begin{split}
        \partial_t v^{N, i}(t, x) + & H(x_i, \nabla_x v^{N, i}(t, x), m^N_x) + \frac{1}{2} \sum_{j=1}^N \operatorname{Tr}(\Sigma \Sigma^T \nabla^2_{x_j x_j} v^{N, i}(t, x))  \\
        &- \sum_{j \neq i} \langle \nabla_p H(x_j, \nabla_{x_j} v^{N, j}(t, x), m^N_x), \nabla_{x_j} v^{N, i}(t, x) \rangle = 0,
    \end{split} \\
    v^{N, i}(T, x) = g(x_i, m^N_x),
\end{cases}
\end{equation*}
where $m^N_x: = \frac{1}{N} \sum_{i = 1}^N \delta_{x_i}$ is the empirical measure of $x = (x_1, \ldots, x_N) \in (\mathbb{R}^d)^N$. Specializing to the case of $b(t, x, a) = a$, the $i$-th player's dynamics are:
\begin{equation} \label{eq:Nplayer}
    dX^i_t = \alpha^i_t dt + \Sigma dB^i_t = \nabla_p H(X^i_t, -\nabla u^{\sigma}(t, X^i_t), m^{N, \Sigma}_{X_t}) dt + \Sigma dB^i_t,
\end{equation}
where $\{B^i\}_{i=1}^N$ are independent $d$-dimensional Brownian motions 
and $m^{N, \Sigma}_{X_t}$ is the (random) empirical measure of the $N$-player system \eqref{eq:Nplayer} at time $t \in [0,T]$:
\begin{equation}
\label{eq:empiricalXt}
 m^{N, \Sigma}_{X_t} = \frac{1}{N}\sum_{i=1}^N \delta_{X^i_t}.
\end{equation} 

\begin{corollary} \label{coro:Nplayer}
Let $\rho$ satisfy the Fokker-Planck equation in the MFG \eqref{eq:mfg} with $\beta = 0$. Let $\Sigma = \beta I$, and denote by $m^{N, \beta}_{X_t}$ the empirical measure in \eqref{eq:empiricalXt} corresponding to $\Sigma = \beta I$. Under the assumptions in Corollary \ref{convergence_of_rho}, and under \cite[Assumption A]{Delarue2020} and \cite[Assumption B or B']{Delarue2020}, there exist $C_1 = C_1(H,g,T)$ and $C_2 = C_2(\beta, H,g,T)$ such that for all $t \in [0,T]$,
\begin{equation}
    W_1(\rho_t, m^{N, \beta}_{X_t}) \leq C_1 \beta + C_2 N^{-\frac{1}{d+8}}.
\end{equation}
\end{corollary}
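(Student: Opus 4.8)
The plan is to interpolate through the flow $\rho^{\beta}_{t} = \operatorname{Law}(X^{\beta}_{t})$ of the $\beta$-viscous MFG and split
\[
W_{1}(\rho_{t}, m^{N,\beta}_{X_{t}}) \le W_{1}(\rho_{t}, \rho^{\beta}_{t}) + W_{1}(\rho^{\beta}_{t}, m^{N,\beta}_{X_{t}}).
\]
The first term is exactly what Corollary \ref{convergence_of_rho} controls: $W_{1}(\rho_{t}, \rho^{\beta}_{t}) \le C_{1}\beta$ with $C_{1} = C_{1}(H,g,T)$. The real content is therefore a quantitative law of large numbers: the empirical measure of the $N$-player system \eqref{eq:Nplayer} (whose feedback is the $\beta$-MFG feedback $-\nabla u^{\beta}$) approximates the common law $\rho^{\beta}_{t}$ of the $\beta$-MFG representative agent at rate $N^{-1/(d+8)}$, in expectation, with a constant allowed to depend on $\beta$ (the random variable $W_{1}(\rho_{t}, m^{N,\beta}_{X_{t}})$ being of course estimated in expectation).

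For the second term I would run the classical Sznitman-type coupling. Let $(\bar{X}^{1},\dots,\bar{X}^{N})$ be i.i.d.\ copies of the McKean--Vlasov process $X^{\beta}$, with $\bar{X}^{i}_{0} = X^{i}_{0} \sim m_{0}$ and each $\bar{X}^{i}$ driven by the same Brownian motion $B^{i}$ as $X^{i}$, and set $\bar{m}_{t} = \frac{1}{N}\sum_{i}\delta_{\bar{X}^{i}_{t}}$. Then $\mathbb{E}\,W_{1}(\rho^{\beta}_{t}, m^{N,\beta}_{X_{t}}) \le \mathbb{E}\,W_{1}(\rho^{\beta}_{t},\bar{m}_{t}) + \frac{1}{N}\sum_{i}\mathbb{E}\,|X^{i}_{t}-\bar{X}^{i}_{t}|$. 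Subtracting the SDEs for $X^{i}$ and $\bar{X}^{i}$ (the $\beta\,dB^{i}$ terms cancel), using that $\nabla_{p}H$ is Lipschitz in all three arguments (Assumption \ref{hamiltonian_assumption_2nd_derivatives}) together with the uniform Lipschitz bound on $\nabla u^{\beta}$ from the semiconcavity constant $K$, and estimating $W_{1}(m^{N,\beta}_{X_{t}},\rho^{\beta}_{t}) \le \frac{1}{N}\sum_{j}|X^{j}_{t}-\bar{X}^{j}_{t}| + W_{1}(\bar{m}_{t},\rho^{\beta}_{t})$, one gets for $e_{t} := \frac{1}{N}\sum_{i}\mathbb{E}\,|X^{i}_{t}-\bar{X}^{i}_{t}|$ a Grönwall-type inequality $e_{t} \le C\int_{0}^{t}e_{s}\,ds + C\int_{0}^{T}\mathbb{E}\,W_{1}(\bar{m}_{s},\rho^{\beta}_{s})\,ds$ with $C=C(H,g,T)$ independent of $\beta$ and $N$, hence $\sup_{t\le T}e_{t} \le C\sup_{t\le T}\mathbb{E}\,W_{1}(\bar{m}_{t},\rho^{\beta}_{t})$. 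Everything is thereby reduced to the i.i.d.\ term $\mathbb{E}\,W_{1}(\bar{m}_{t},\rho^{\beta}_{t})$, which a quantitative law of large numbers for empirical measures in $\mathbb{R}^{d}$ bounds by $C\,N^{-1/(d+8)}$ once a moment bound on $\rho^{\beta}_{t}$, uniform in $t\in[0,T]$, is available. That moment bound is easy but must be recorded: the drift $\nabla_{p}H(x,-\nabla u^{\beta}(t,x),\mu)$ grows at most linearly in $x$ uniformly in $\mu$ — by $\|\nabla^{2}_{xp}H\|_{\infty},\|\nabla^{2}_{pp}H\|_{\infty}<\infty$, the linear-in-$x$ growth of $\nabla u^{\beta}$, and $|\nabla_{p}H(0,0,\mu)|\le C(0)$ from Assumption \ref{convexity_of_H} — so with $m_{0}\in\mathcal{P}_{2}$ and bounded additive noise $\sup_{t\le T}\mathbb{E}|X^{\beta}_{t}|^{2}<\infty$, and likewise $\sup_{t\le T}\mathbb{E}|X^{i}_{t}|^{2}<\infty$ uniformly in $N$ (no circularity through the empirical measure, since the growth is in $X^{i}_{t}$ alone). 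Assembling the three estimates yields $\mathbb{E}\,W_{1}(\rho_{t}, m^{N,\beta}_{X_{t}}) \le C_{1}\beta + C_{2}N^{-1/(d+8)}$.

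I expect the main obstacle to be the self-referential character of the coupling step: because $m^{N,\beta}_{X_{t}}$ enters the drift of every $X^{i}$, one cannot control $|X^{i}_{t}-\bar{X}^{i}_{t}|$ path by path, and must instead close the loop on the symmetrized quantity $e_{t}$ via Grönwall, taking care that the constant there is finite and $\beta$-free so that the $\beta$-dependence of $C_{2}$ is confined to the concentration/LLN input. A secondary but genuine point is that the exponent in that input is $\tfrac{1}{d+8}$ rather than the sharper $\tfrac{1}{d}$, reflecting the limited (second-order) moment control at hand, and that the resulting bound is meaningful only for a fixed $\beta>0$ — precisely the regime in which it is to be traded off against the $\mathcal{O}(\beta)$ bias term $C_{1}\beta$.
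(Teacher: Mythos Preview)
Your decomposition via the triangle inequality and the use of Corollary~\ref{convergence_of_rho} for $W_1(\rho_t,\rho^\beta_t)$ match the paper's proof exactly. The difference is entirely in the second term: the paper simply invokes \cite[Theorem~3.1]{Delarue2020} as a black box to get $\sup_t W_1(\rho^\beta_t, m^{N,\beta}_{X_t}) \le C_2 N^{-1/(d+8)}$, whereas you supply a self-contained Sznitman coupling to i.i.d.\ McKean--Vlasov copies, close a Gr\"onwall loop on $e_t$, and finish with an empirical-measure law of large numbers. For the system as literally written in \eqref{eq:Nplayer} --- particles driven by the \emph{MFG} feedback $-\nabla u^\beta$ with the empirical measure in the third slot of $\nabla_p H$ --- your argument is correct and more elementary; it even yields a $\beta$-independent Gr\"onwall constant, so any $\beta$-dependence of $C_2$ would come only through the moment input to the concentration step.

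Two caveats are worth flagging. First, the exponent $N^{-1/(d+8)}$ is not what standard empirical-measure concentration (Fournier--Guillin type) delivers from a second-moment bound; that route gives $N^{-1/d}$ for $d\ge 3$, so either sharpen the stated exponent or cite a source for the weaker one you claim. Second, and more substantively: the specific rate $N^{-1/(d+8)}$ and the blow-up of $C_2$ as $\beta\to 0$ emphasized after the corollary are signatures of the $N$-player \emph{Nash equilibrium} analysis in \cite{Delarue2020}, which goes through master-equation regularity. If the intended $N$-player system is the genuine Nash equilibrium (feedback $-\nabla_{x_i} v^{N,i}$ rather than $-\nabla u^\beta$), your coupling does not close, because controlling $|\nabla_{x_i} v^{N,i}-\nabla u^\beta|$ is precisely the hard content the citation supplies and cannot be recovered by a bare Sznitman argument.
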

\begin{proof}
Let $\rho^{\beta}_t$ satisfy the Fokker-Planck equation 
in the MFG \eqref{eq:mfg} with $\beta > 0$.
We have:
\begin{equation}
    \label{eq:triangleNplayer}
    W_1(\rho_t, m^{N, \beta}_{X_t}) \leq W_1(\rho_t, \rho_t^{\beta}) + W_1(\rho^\beta_t, m^{N, \beta}_{X_t}).
\end{equation}
By \cite[Theorem 3.1]{Delarue2020}, there is a constant $C_2 = C_2(\beta, H,g,T)$ such that
\begin{equation}
    \label{eq:DLRbd}
    \sup_{t \in [0,T]} W_1(\rho^\beta_t, m^{N, \beta}_{X_t}) \le C_2 N^{-\frac{1}{d+8}}.
\end{equation}
Combining Equations \eqref{eq:triangleNplayer}, \eqref{eq:DLRbd} with Corollary \ref{convergence_of_rho} yields the desired bound.
\end{proof}

\quad As a result of Corollary \ref{coro:Nplayer}, we obtain the population level to approximate the probability flow $\rho_t$ of the first-order MFG via large player system.
Assume that an accuracy of $\varepsilon > 0$ is needed,
i.e., $W_1(\rho_t, m^{N, \beta}_{X_t}) \le \varepsilon$.
Then we set:
\begin{equation}
    C_1 \sigma \asymp \varepsilon \quad \mbox{and} \quad C_2(\sigma) N^{-\frac{1}{d+8}} \asymp \varepsilon.
\end{equation}
Here we assume that $(H,g,T)$ are given, so $C_2$ only depends on $\sigma$.
A close scrutiny of the proofs (in particular, Equations 4.16 and 4.17) in \cite{Delarue2020} indicates that $C_2(\beta)$ blows up (in a rather complicated way), as $\beta \to 0$.
So we first take $\sigma \asymp \varepsilon$,
and then take $N \asymp \left( \varepsilon  C_2^{-1}(\varepsilon) \right)^{-(d+8)}$. That is, it requires at most $N \left( \varepsilon  C_2^{-1}(\varepsilon) \right)^{-(d+8)} \gg \varepsilon^{-(d+8)}$ players to approximate the probability flow of the first-order MFG with accuracy $\varepsilon$.

\subsection{Mean field control}

Next we consider a mean field control problem \cite[Proposition 2.14]{DDJ2023}, where a central planner seeks to control $N$ particles by selecting a control process $\alpha = (\alpha^1, ..., \alpha^N)$ from $\mathcal{A}^N$, which is the set of $(\mathbb{R}^d)^N$-valued, progressively measurable processes, whose definition is immediately below \cite[Equation 2.8]{DDJ2023}. Throughout this subsection, $\beta > 0$ is fixed, and $N$ may vary. The dynamics of the $i$-th particle evolve as:
\begin{equation*}
\begin{cases}
    dX^i_t = \alpha^i_t(X_t^i) dt + \beta  dB^i_t \quad \mbox{for } t \in [t_0, T],\\
    X^i_{t_0} = x^i_{0},
\end{cases}
\end{equation*}
where $\{B^i\}_{i=1}^N$ are independent $d$-dimensional Brownian motions.
Denote the average state of the particles by $\overline{X}^N_t = \frac{1}{N} \sum_{i=1}^N X^i_t$, which satisfies the SDE
\begin{equation}
\begin{cases}
    d\overline{X}^N_t = \frac{1}{N} \sum_{i=1}^N \alpha^i_t dt + \frac{\beta}{\sqrt{N}} d\overline{B}_t \quad \mbox{for } t \in [t_0, T],\\
    \overline{X}^N_{t_0} = \frac{1}{N}\sum_{i=1}^N x^i_0,
\end{cases}
\end{equation}
where $\overline{B}_t = N^{-1/2}\sum_{i = 1}^N B^i_t$ is a $d$-dimensional Brownian motion. The objective of the central planner is to solve the optimization problem in \cite[Equation 2.21]{DDJ2023}:
\begin{equation} \label{eq:VN}
    V^N(t_0, x_0) = \inf_{\alpha} \mathbb{E} \left[ \int_{t_0}^T \frac{1}{N}\sum_{i=1}^N L(\alpha^i_t(X^i_t)) + F(\overline{X}_t) dt + G(\overline{X}_T) \bigg| X_{t_0} = x_0 \right],
\end{equation}
where $F, G: \mathbb{R}^d \to \mathbb{R}$ are assumed to be Lipschitz, $L \in C^2(\mathbb{R}^d)$ satisfies the second-derivative bounds $\frac{1}{C}I \leq \nabla^2 L \leq C I$ for some $C \geq 1$, and 
{$\alpha^i$ is a function of both $\omega$ and $x$ (though the optimizer is a deterministic function)}. An easy argument from \cite{DDJ2023} shows that the optimality in \eqref{eq:VN} is achieved by a deterministic control, and $V^N(t, x) = v^N(t, \overline{m}^N_x)$, where $v^N$ solves the HJ equation:
\begin{equation}
\begin{cases}
    -\partial_t v^N(t, x) + H(-\nabla v^N(t, x)) - F(x) = \frac{\beta^2}{2N} \Delta v^N(t, x),  \\
    v^N(T, x) = G(x), 
\end{cases}
\end{equation}
and where $H(-p)$ is the Legendre transform of $L$. By classical viscosity theory, $v^N$ converges to $v$, which is the solution to the first-order equation:
\begin{equation} \label{eq:vHJ}
\begin{cases}
    -\partial_t v(t, x) + H(-\nabla v(t, x)) - F(x) = 0,  \\
    v(T, x) = G(x).
\end{cases}
\end{equation}
Furthermore, $\sup_{[0,T] \times \mathbb{R}^d}|v^N - v| = \mathcal{O}(N^{-\frac{1}{2}})$.

\quad Let $\mu^N_t := \operatorname{Law}(\overline{X}^N_t)$ be the probability density of the average state $\overline{X}^N_t$. The following result specifies the limit of $\mu^N_t$, as $N \to \infty$.
\begin{corollary}
Let the aforementioned assumptions and those in Corollary \ref{convergence_of_rho} hold. Let $\{X^i_0\}_{i=1}^N$ be independent and identically distributed according to $m_0$ with bounded support \footnote{For $d = 1$, the assumption of bounded support can be removed, and $W_1(\mu^N_t, \mu_t) \le C/\sqrt{N}$ for some $C > 0$ (independent of $N$). This is because the first term in the last inequality of \eqref{eq:mu3} is bounded by $C/\sqrt{N}$; see the discussion after \cite[Theorem 3.4]{TT23} or \cite{Rio09}.}, and covariance matrix $\Sigma$. Then for all $t \in [0,T]$, $\mu_t^N$ converges to $\mu_t$ in $W_1$, where $\mu_t$ is the solution to the equation:
\begin{equation}
\begin{cases}
    \partial_t \mu_t + \operatorname{div}_x \{\mu_t \nabla_p H(-\nabla v(t, x))\} = 0,\\
    \mu_0 \sim \delta_{\int x m_0(x) dx}.
\end{cases}
\end{equation}
Assume further that $\nabla_p H(-\nabla v(t,x))$ is Lipschitz in $x$ bounded in $t$ \footnote{ 
The bound \eqref{eq:bdmu} is a conditional result on the Lipschitz assumption \eqref{eq:Lipkey}.
The assumption implicitly requires $v$ to be a classical solution, 
which is not true in general.
A sufficient condition for this assumption to hold is that $H$ is uniformly convex, $F, G$ are convex, 
and $H, F, G$ are smooth with bounded Hessians,
i.e., $c_H I \le \nabla^2_{pp}H \le C_H I$, $0 \le \nabla^2_{xx}F \le C_F$ and $0 \le \nabla^2_{xx} G \le C_G$.
The uniform convexity of $H$, the convexity of $F, G$ and their smoothness imply that $v$ is a classical solution because the characteristics do not cross. 
The boundedness of $\nabla^2_{xx} F$ and $\nabla^2_{xx}G$ further guarantees that $0 \le \nabla^2_{xx} v \le \max\left(C_G, \sqrt{C_F/c_H}\right) I$.
Combined with the boundedness of $\nabla^2_{pp}H$ yields the assumption.}, i.e., there is $L > 0$ such that
\begin{equation}\label{eq:Lipkey}
|\nabla_p H(-\nabla v(t,x)) - \nabla_p H(-\nabla v(t,y))| \le L |x-y| \quad \mbox{for all } t,x,y.
\end{equation}
Then there exists a constant $C > 0$ (independent of $N$) such that
\begin{equation} \label{eq:bdmu}
    W_2(\mu^N_t, \mu_t) \le C\left( \frac{1}{\sqrt{N}} + \frac{\sqrt{\log N}}{N} + \sqrt{\frac{\mbox{Tr} \, \Sigma}{N}}\right).
\end{equation}
\end{corollary}
\begin{proof}
First observe that the pair $(v^N, \mu^N)$ solves the (degenerate) MFG:
\begin{equation}
\begin{cases}
    -\partial_t v^N(t, x) + H(-\nabla v^N(t, x)) - F(x) = \frac{\beta^2}{N} \Delta v^N(t, x), \\
    \partial_t \mu^N_t + \operatorname{div}_x \{\mu^N_t \nabla_p H(-\nabla v^N(t, x))\} = \frac{\beta^2}{N} \Delta \mu^N_t,\\
    v^N(T, x) = G(x), \quad \mu^N_0 = \operatorname{Law}(\overline{X}^N_0),
\end{cases}
\end{equation}
Note that the HJ equation is not coupled with $\mu^N$. Let $(\tilde{v}^N, \tilde{\mu}^N)$ be a solution to the MFG:
\begin{equation*}
\begin{cases}
    -\partial_t \tilde{v}^N(t, x) + H(-\nabla \tilde{v}^N(t, x)) - F(x) = 0, \\
    \partial_t \tilde{\mu}^N_t + \operatorname{div}_x \{\tilde{\mu}^N_t \nabla_p H(-\nabla \tilde{v}^N(t, x))\} = 0,\\
    \tilde{v}^N(T, x) = G(x), \quad \tilde{\mu}^N_0 = \operatorname{Law}(\overline{X}^N_0).
\end{cases}
\end{equation*}
(So $\tilde{v}^N = v$ in the equation \eqref{eq:vHJ}.)
As a consequence of Corollary \ref{convergence_of_rho}, we obtain:
\begin{equation} \label{eq:mu1}
    W_2(\mu^N_t, \tilde{\mu}^N_t) \le \frac{C}{\sqrt{N}} \quad \mbox{for some } C > 0 \mbox{ (independent of } N).
\end{equation}
By the Lipschitz assumption \eqref{eq:Lipkey} 
and the Cauchy-Lipschitz theory of the continuity equation 
(see e.g., \cite[Section 2]{AC14}):
\begin{equation} \label{eq:mu2}
    W_2(\tilde{\mu}^N_t, \mu_t) \le C W_2\left(\operatorname{Law}(\overline{X}^N_0), \delta_{\int x m_0(x) dx}\right).
\end{equation}
Without loss of generality, assume that $X_0^i$ has mean $0$, i.e., $\int x m_0(x) dx  = 0$. We have:
\begin{equation} \label{eq:mu3}
    \begin{aligned}
    W_2\left(\operatorname{Law}(\overline{X}^N_0), \delta_0 \right)
    & \le W_2\left(\operatorname{Law}(\overline{X}^N_0), \mathcal{N}\left(0, \frac{\Sigma}{N}\right)\right) + W_2\left(\mathcal{N}\left(0, \frac{\Sigma}{N}\right),\delta_{0}\right) \\
    & \le \frac{C \sqrt{d \log N}}{N} + \sqrt{\frac{\mbox{Tr} \Sigma}{N}},
    \end{aligned}
\end{equation}
where the first term in the last inequality follows from \cite[Theorem 1]{EMZ20} \footnote{A slightly looser bound $\mathcal{O}(\sqrt{d} \log N/N)$ (up to a $\log N$ factor) was proved in \cite[Theorem 1.1]{Zhai18}.},
and the $W_2$ distance of two Gaussian vectors (see e.g., \cite[Proposition 7]{GS1984}). 
Combining the equations \eqref{eq:mu1}, \eqref{eq:mu2} and \eqref{eq:mu3} yields
the desired bound.
\end{proof}


\subsection{Policy iteration}

As mentioned in the Introduction, there has been growing interest in first-order MFG models, but solving first-order MFGs numerically poses challenges.

\quad Policy iteration (PI) is a class of approximate dynamic programming algorithms
that have been used to solve stochastic control problems
with provable guarantees \cite{GTZ25, KSS20, MWZ24, TWZ25, TZ24}.
In a series of papers \cite{CCG21, Cam22, CT22}, PI was proposed to solve
second-order MFGs with separable Hamiltonians.
An extension to second-order MFGs with non-separable Hamiltonians
was considered in \cite{Lauriere2022}.
However, PI is not directly applicable to the first-order problems
due to ill-posedness \cite{TTZ25}. 
So a reasonable idea is to approximate first-order MFGs by second-order
MFGs \footnote{This idea was also proposed in \cite{TTZ25} to solve deterministic control problems by PI.}, 
and a convergence rate of second-order MFGs to the vanishing viscosity limit
gives the approximation error.

\quad Now, let us specify the PI for solving the MFG \eqref{eq:mfg} with $\beta > 0$. For simplicity, we assume that the terminal data $g(x,\rho) = g(x)$ depend only on $x$. There are three steps: given $R > 0$ and a measurable function $q^0: [0,T] \times \mathbb{R}^d \to \mathbb{R}^d$ with $||q^0||_\infty \le R$, we iterate for $n \ge 0$,
\begin{itemize}[itemsep = 3 pt]
\item[(i)]
Solve 
\begin{equation} \label{eq:PI1}
    \partial_t \rho_t^{n, \beta} -\operatorname{div}\{\rho_t^{n, \beta} q^n \} = \frac{\beta^2}{2} \Delta \rho_t^{n, \beta}, \quad \rho_0^{n, \beta} = m_0.
\end{equation}
\item[(ii)]
Solve 
\begin{equation} \label{eq:PI2}
    -\partial u^{n, \beta} + q^n \nabla u^{n, \beta} -\mathcal{L}(x,-\nabla u^{n, \beta}, q^n, \rho_t^{n, \beta}) = \frac{\beta^2}{2} \Delta u^{n, \beta}, \quad u^{n, \beta}(T,x) = g(x),
\end{equation}
where $\mathcal{L}(x, p,q, \rho):= p\cdot q - H(x,p,\rho)$.
\item[(iii)]
Update the policy
\begin{equation} \label{eq:PI3}
q^{n+1}(t,x):= {\arg \max}_{|q| \le R} \left(q \cdot \nabla u^{n, \beta}(t,x) - \mathcal{L}(x, q, \rho_t^{n, \beta})\right),
\end{equation}
where $\mathcal{L}(x,q, \rho):=\max_p \mathcal{L}(x,p,q,\rho)$.
\end{itemize}

\quad In all of the aforementioned works \cite{CCG21, Cam22, CT22, Lauriere2022},
the convergence (rate) of PI \eqref{eq:PI1}--\eqref{eq:PI3} for MFGs 
was proved on the torus $\mathbb{R}^d/\mathbb{Z}^d$,
rather than the whole space $\mathbb{R}^d$ to avoid boundary effects. 
Nevertheless, a review of the methods in these papers 
allow to prove the convergence of PI for solving MFGs on $\mathbb{R}^d$.
The extension is technical, and goes beyond the scope of this paper.
The claim below, extending \cite{Lauriere2022}, summarizes the ``expected" convergence results of PI for solving second-order MFGs on $\mathbb{R}^d$.
We plan to prove it rigorously in the future.
\begin{claim} \label{cl:PI}
Under suitable conditions on $H(x,p,\rho)$, $m_0(x)$ and $g(x)$
 (e.g., $H$ and its derivatives are Lipschitz and $H$ is strictly convex in $p$,
 and $m_0$, $g$ have some Sobolev regularity),
 for any compact set $\mathcal{K} \subset \mathbb{R}^d$, 
 there exists $T = T(\mathcal{K}, \beta) > 0$ and $C = C(\mathcal{K}, \beta)$ such that 
 \begin{equation}
     ||u^{n, \beta} - u^\beta||_{W^{1,2}_r([0,T] \times \mathcal{K})} + 
     ||\rho^{n, \beta} - \rho^\beta||_{W^{1,2}_r([0,T] \times \mathcal{K})}
     \le C e^{-n}, \quad \mbox{for } r > d+2,
 \end{equation}
where $W_r^{1,2}(Q)$ denotes the space of functions $f$ such that $\partial_t^\delta \partial_x^{\sigma}f \in L^r(Q)$ for all multi-indices $(\delta, \delta')$ with $2 \delta + \delta' \le 2$,
and
\begin{equation*}
    ||f||_{W_r^{1,2}(Q)}:= \left(\int_Q \sum_{2 \delta + \delta' \le 2} |\partial_t^\delta \partial_x^{\delta'}f|^r dt dx\right)^{\frac{1}{r}}.
\end{equation*}
The constants $T(\mathcal{K}, \beta), C(\mathcal{K}, \beta) > 0$ depend on $\mathcal{K}, \beta$ in a complicated way. 
Given $\mathcal{K}$ and as $\beta \to 0$, $C(\mathcal{K}, \beta)$ is typically of order $e^{\frac{C}{\beta^2}}$ for some $C>0$, 
and $T(\mathcal{K}, \beta)$ is typically of order $\beta^{-\kappa}$ for some $\kappa > 0$.
\end{claim}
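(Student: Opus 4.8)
\quad We sketch the plan for proving Claim \ref{cl:PI}. The idea is to adapt the policy-iteration analysis of \cite{CCG21, Cam22, CT22, Lauriere2022}, developed on the torus, to the whole space $\mathbb{R}^d$, replacing the global parabolic estimates available there by interior parabolic estimates localized to $\mathcal{K}$. We regard one PI cycle as a map $\Phi\colon q^n \mapsto q^{n+1}$ factoring through steps (i)--(iii): step (i) sends $q^n$ to $\rho^{n,\beta}$, step (ii) sends $(q^n,\rho^{n,\beta})$ to $u^{n,\beta}$, and step (iii) sends $(u^{n,\beta},\rho^{n,\beta})$ to $q^{n+1}$. The goal is to show that, for $T = T(\mathcal{K},\beta)$ small enough, $\Phi$ is an $e^{-1}$-contraction on a suitable ball of $W^{1,2}_r([0,T]\times\mathcal{K})$, from which the geometric rate $e^{-n}$ follows at once.

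\quad \emph{A priori bounds, uniform in $n$.} Since step (iii) takes the argmax over $|q|\le R$, every iterate obeys $\|q^n\|_\infty\le R$, so \eqref{eq:PI1} is a uniformly parabolic linear equation with bounded drift and diffusivity $\beta^2/2$; the $L^r$ theory of linear parabolic equations together with the assumed Sobolev regularity of $m_0$ then gives $\rho^{n,\beta}\in W^{1,2}_r([0,T]\times\mathcal{K}_1)$ for any $\mathcal{K}\Subset\mathcal{K}_1$, with a bound depending on $R$, $\beta$, $T$ but not on $n$. Inserting this into \eqref{eq:PI2}, which for frozen $(q^n,\rho^{n,\beta})$ is a uniformly parabolic equation for $u^{n,\beta}$ whose data is Lipschitz in $(x,\nabla u^{n,\beta},\rho^{n,\beta})$ under the assumptions on $H$ and $g$, and using maximum-principle barriers to control the linear-in-$x$ growth of $u^{n,\beta}$ and $\nabla u^{n,\beta}$, a further interior estimate yields $u^{n,\beta}\in W^{1,2}_r([0,T]\times\mathcal{K}_2)$, uniformly in $n$, for $\mathcal{K}_1\Subset\mathcal{K}_2$. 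Because $r>d+2$, the parabolic Sobolev embedding controls $u^{n,\beta}$ and $\nabla u^{n,\beta}$ in H\"older norm, so step (iii) --- which is Lipschitz in $(\nabla u^{n,\beta},\rho^{n,\beta})$ via the bounds on $\nabla^2_{xp}H$, $\nabla^2_{pp}H$, $\nabla^2_{p\mu}H$ from Assumptions \ref{hamiltonian_assumption_2nd_derivatives} and \ref{convexity_of_H} --- closes the loop for $q^{n+1}$. Chaining the finitely many nested cutoffs $\mathcal{K}\Subset\mathcal{K}_1\Subset\cdots\Subset\mathcal{K}_m$ that connect $\mathcal{K}$ to a fixed large ball keeps every constant uniform in $n$, so a closed ball in this scale is $\Phi$-invariant.

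\quad \emph{Contraction and passage to the limit.} Put $\delta q^n:=q^{n+1}-q^n$, $\delta\rho^n:=\rho^{n+1,\beta}-\rho^{n,\beta}$, $\delta u^n:=u^{n+1,\beta}-u^{n,\beta}$. Subtracting consecutive copies of \eqref{eq:PI1} and \eqref{eq:PI2} shows that $\delta\rho^n$ and $\delta u^n$ solve linear parabolic equations with vanishing initial (resp.\ terminal) data and with sources controlled by $\delta q^{n-1}$ --- and, for $\delta u^n$, also by $\delta\rho^n$, through the Lipschitz regularity of $H$ and $g$. The interior $L^r$ estimates above, together with the vanishing data, then give $\|\delta\rho^n\|_{W^{1,2}_r([0,T]\times\mathcal{K})}+\|\delta u^n\|_{W^{1,2}_r([0,T]\times\mathcal{K})}\le C\,T^{\theta}\,\|\delta q^{n-1}\|_{\infty}$ for some $\theta>0$ and $C=C(\mathcal{K},\beta)$; since step (iii) is Lipschitz with an $O(1)$ constant, $\|\delta q^n\|_{\infty}\le C\,T^{\theta}\,\|\delta q^{n-1}\|_{\infty}$. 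Taking $T=T(\mathcal{K},\beta)$ so small that $C\,T^{\theta}\le e^{-1}$ makes $\Phi$ a contraction, so $q^n\to q^\infty$ at rate $e^{-n}$, and the same interior estimates promote this to $(u^{n,\beta},\rho^{n,\beta})\to(u^\infty,\rho^\infty)$ in $W^{1,2}_r([0,T]\times\mathcal{K})$ at rate $e^{-n}$; passing to the limit in \eqref{eq:PI1}--\eqref{eq:PI3} and using the uniqueness from Assumption \ref{mfg_wellposedness} identifies $(u^\infty,\rho^\infty)$ with $(u^\beta,\rho^\beta)$, which is the claimed bound. As for the heuristic orders of the constants: the interior parabolic estimates for a diffusion of size $\beta^2/2$ carry constants that blow up as $\beta\to0$ --- essentially like inverse powers of $\beta$, amplified through Gr\"onwall in time and through the successive localizations --- and tracing this dependence should give $C(\mathcal{K},\beta)$ of order $e^{C/\beta^2}$ and an admissible $T(\mathcal{K},\beta)$ of order $\beta^{-\kappa}$, as stated; making it precise is part of the intended future work.

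\quad The main obstacle, we expect, will be the a priori step on $\mathbb{R}^d$: on the torus the global $L^r$ and energy estimates are free, whereas here every bound must be localized, and a single interior estimate for a diffusion of size $\beta^2/2$ already carries a constant that blows up as $\beta\to0$; the delicate point is to show that chaining the cutoffs connecting $\mathcal{K}$ to a region where $\rho^{n,\beta}$ and $u^{n,\beta}$ are a priori controlled does not worsen the blow-up beyond $e^{C/\beta^2}$, all while keeping every constant independent of $n$. A secondary difficulty is that, $H$ being non-separable, the Fokker-Planck and HJB errors feed into one another, so the contraction step has to be organized as a coupled fixed-point estimate for the pair $(\delta\rho^n,\delta q^n)$, with care over the order in which $\rho^{n,\beta}$ and $u^{n,\beta}$ are updated within a single PI cycle.
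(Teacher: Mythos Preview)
The paper does not prove this statement. Claim \ref{cl:PI} is explicitly presented as an \emph{expected} result, with the paper stating just before it that ``the extension is technical, and goes beyond the scope of this paper'' and that ``we plan to prove it rigorously in the future.'' There is therefore no paper proof to compare against; your sketch is, in spirit, exactly what the authors say they intend to carry out --- adapting the contraction-mapping analysis of \cite{CCG21, Cam22, CT22, Lauriere2022} from $\mathbb{T}^d$ to $\mathbb{R}^d$ via localized parabolic estimates.

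That said, there is one internal inconsistency in your sketch that you should flag. Your contraction step chooses $T=T(\mathcal{K},\beta)$ \emph{small} so that $C(\mathcal{K},\beta)\,T^\theta\le e^{-1}$; since you argue (and the claim asserts) that $C(\mathcal{K},\beta)$ blows up like $e^{C/\beta^2}$ as $\beta\to 0$, this forces $T$ to be \emph{exponentially small} in $1/\beta^2$. But the claim states the opposite behavior: $T(\mathcal{K},\beta)$ is of order $\beta^{-\kappa}$, i.e.\ it \emph{grows} as $\beta\to 0$. Either the heuristic in the claim is off, or the small-time contraction mechanism you describe is not the one intended --- perhaps the argument should instead run on a fixed time window with a $\beta$-dependent contraction factor, or iterate the short-time result across subintervals. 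This discrepancy does not affect the overall strategy, but it does mean your sketch cannot recover the stated $\beta$-scaling of $T$ as written.
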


\quad With Claim \ref{cl:PI} in place, 
we derive the (time-weighted) convergence rate of $u^{\beta,n}$ to $u$ by simply applying the triangle inequality.
\begin{corollary} \label{coro:PI}
    Let $\mathcal{K} \subseteq \mathbb{R}^d$ be a compact set. 
    Under the assumptions in Theorem \ref{main_result} and Claim \ref{cl:PI}, there exist $T = T(\beta) > 0$, $C_1 = C_1(\mathcal{K})$ and $C_2 = C_2(\beta)$ such that
    \begin{equation}
    \frac{1}{T}||u^{n, \beta} - u||_{L^r([0,T] \times \mathcal{K})} \le C_1 \beta + C_2(\beta) e^{-n} \quad \mbox{for } r > d+2.
    \end{equation}
\end{corollary}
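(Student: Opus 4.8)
The plan is to prove the estimate by a direct application of the triangle inequality, decomposing the error into a \emph{policy-iteration error} $u^{n,\beta} - u^\beta$ and a \emph{vanishing-viscosity error} $u^\beta - u$, and then invoking Claim \ref{cl:PI} for the former and Theorem \ref{main_result} for the latter. Concretely, with the compact set $\mathcal{K}$ fixed and $T = T(\mathcal{K},\beta)$ the time horizon furnished by Claim \ref{cl:PI}, for every $n \ge 0$ one has
\begin{equation*}
\|u^{n,\beta} - u\|_{L^r([0,T]\times\mathcal{K})} \le \|u^{n,\beta} - u^\beta\|_{L^r([0,T]\times\mathcal{K})} + \|u^\beta - u\|_{L^r([0,T]\times\mathcal{K})},
\end{equation*}
so it suffices to control each summand after dividing by $T$.

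For the first summand I would note that the norm $\|\cdot\|_{W^{1,2}_r}$ dominates the plain $L^r$ norm, since the multi-index $(\delta,\delta') = (0,0)$ is one of the terms in the sum defining $\|\cdot\|_{W^{1,2}_r(Q)}$. Hence Claim \ref{cl:PI} yields $\|u^{n,\beta} - u^\beta\|_{L^r([0,T]\times\mathcal{K})} \le C(\mathcal{K},\beta)\,e^{-n}$, and dividing by $T(\mathcal{K},\beta)$ and setting $C_2(\beta) := C(\mathcal{K},\beta)/T(\mathcal{K},\beta)$ (with $\mathcal{K}$ regarded as fixed) produces the $C_2(\beta)\,e^{-n}$ contribution.

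For the second summand I would upgrade the $L^\infty$-bound of Theorem \ref{main_result}, namely $\|u^\beta - u\|_{L^\infty([0,T]\times\mathcal{K})} \le C(1+\operatorname{diam}(\mathcal{K}))\beta$, to an $L^r$-bound on the space-time cylinder by Hölder's inequality: $\|u^\beta - u\|_{L^r([0,T]\times\mathcal{K})} \le (T\,|\mathcal{K}|)^{1/r}\,C(1+\operatorname{diam}(\mathcal{K}))\beta$, where $|\mathcal{K}|$ is the Lebesgue measure of $\mathcal{K}$. Dividing by $T$ leaves the factor $T^{1/r-1}$, and since $r > d+2 \ge 3$ the exponent $1/r - 1$ is negative; because $T(\mathcal{K},\beta)$ is of order $\beta^{-\kappa}$ (hence bounded below over the relevant range of $\beta$), one may take $T \ge 1$, so $T^{1/r-1} \le 1$, and the whole contribution is bounded by $C_1(\mathcal{K})\beta$ with $C_1(\mathcal{K}) := |\mathcal{K}|^{1/r}\,C(1+\operatorname{diam}(\mathcal{K}))$, depending only on $\mathcal{K}$. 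Summing the two bounds gives the claim.

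The computation is entirely routine once Claim \ref{cl:PI} is granted, and there is no substantive analytic obstacle; the only point deserving care is the bookkeeping of the $T$-dependence --- verifying that the time-weighting $1/T$ is precisely what makes the viscosity constant $C_1$ independent of $\beta$ while letting the (unavoidably $\beta$-dependent) policy-iteration constant absorb the growth of both $T(\mathcal{K},\beta)$ and $C(\mathcal{K},\beta)$ as $\beta \to 0$. All the genuine difficulty is hidden inside Theorem \ref{main_result} and the as-yet-unproved Claim \ref{cl:PI}.
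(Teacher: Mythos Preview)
Your proposal is correct and follows exactly the approach indicated in the paper, which merely states that the corollary is obtained ``by simply applying the triangle inequality'' and gives no further detail. In fact you supply more bookkeeping (the domination of $L^r$ by $W^{1,2}_r$, the H\"older step, and the handling of the $T$-dependence) than the paper does.
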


\quad As a consequence of Corollary \ref{coro:PI}, we get the complexity of PI for solving the first-order MFGs. 
Assume that an accuracy of $\varepsilon > 0$ is required,
i.e., $\frac{1}{T}||u^{n, \beta} - u||_{L^r([0,T]\times \mathcal{K})}  \le \varepsilon$.
Then we set:
\begin{equation}
    \beta \asymp \varepsilon \quad \mbox{and} \quad C_2(\beta) e^{-n} \asymp \varepsilon,
\end{equation}
so $n \asymp \log(C(\varepsilon)/\varepsilon)$.
The discussion at the end of Claim \ref{cl:PI} suggests that
$C_2(\varepsilon)$ be of order $e^{\frac{C}{\varepsilon^2}}$ for some $C > 0$, 
as $\varepsilon \to 0$.
Therefore, we have $n \asymp \varepsilon^{-2}$, 
i.e., it takes the order of $\varepsilon^{-2}$ steps for PI to approximate
$u^0$ with accuracy $\varepsilon$.


\section{Examples and numerical results} \label{sc6}

\subsection{A closed-form example} \label{sc61}

As mentioned in the introduction, the convergence rate of vanishing viscosity approximations to MFGs matches the classically optimal rate of that to HJ equations, so it is hard to expect a better rate in the general setting. Nevertheless, this does not rule out some MFGs with special structures, which may have sharper rates of convergence.

\quad Consider the following example from \cite{CCS24, CSZ24}:
\begin{equation}\label{eq:closedmfg}
\begin{cases}
    -\partial_t u^{\beta} + \frac{1}{2} |\nabla u^\beta|^2 - \frac{1}{2} \left(x- \int y \rho^\beta_t(y)dy \right)^2 = \frac{\beta^2}{2} \Delta u  & \text{ on } [0, T] \times \mathbb{R}^d, \\
    \partial_t \rho^{\beta}_t - \operatorname{div}_x\{\rho^{\beta}_t \nabla u^\beta\} = \frac{\beta^2}{2}\Delta \rho^{\beta}_t & \text{ on } [0, T] \times \mathbb{R}^d, \\
    u^{\beta}(T, x) = 0, \quad \rho^{\beta}_0(x)\sim \mathcal{N}(m, \sigma^2 I) & \text{ on } \mathbb{R}^d.
\end{cases}
\end{equation}
That is, the MFG \eqref{eq:closedmfg} has a nonlocal and separable Hamiltonian
\begin{equation}
    H(x,p,\mu) = \frac{1}{2} |p|^2 - \frac{1}{2} \left(x - \int_y y \mu(y) dy\right)^2,
\end{equation}
with $g(x, \mu) = 0$ and $m_0(x)$ being Gaussian with mean $m$ and covariance matrix $\sigma^2 I$. Interestingly, this MFG has a closed-form solution:
\begin{equation}
    u^\beta(t,x) = \frac{e^{2T-t} - e^t}{2(e^{2T-t} + e^t)}|x -m|^2 - \frac{\beta^2 d}{2} \ln\left( \frac{2 e^T}{e^{2T-t} + e^t}\right),
\end{equation}
and
\begin{equation}
    \rho^\beta_t(x) \sim \mathcal{N}\left(m, \left(\sigma^2 \left(\frac{e^{2T-t} + e^t}{e^{2T} + 1} \right)^2 + \beta^2 \frac{(e^{2T-t}+e^t)^2 (e^{2t}-1)}{2(e^{2T} + 1)(e^{2T} + e^{2t})}\right) I \right).
\end{equation}
As a consequence,
\begin{equation}
    ||u^\beta - u||_\infty \le C \beta^2 \quad \mbox{and} \quad 
    W_1(\rho_t^\beta, \rho_t) \le C \beta^2,
\end{equation}
for some $C > 0$ (independent of $\beta$). 
The same rate $\mathcal{O}(\beta^2)$ for vanishing viscosity 
may also be extended to a class of displacement monotone MFGs 
by using the arguments in \cite{CM24}.

\subsection{Numerical examples} \label{sc62}
We proved in Theorem \ref{main_result} that $u^\beta$ of MFGs with a nonlocal Hamiltonian converges at a rate of $\mathcal{O}(\beta)$. Here we compare the rate to that of MFGs with a local coupling. 

\quad We consider the following example on $[0, 0.25] \times \mathbb{T}^1$ (i.e., $T = 0.25$) with:
\begin{equation}
    H(x, p, \mu(x)) = 0.01 \left\{|p|^2 - \mu(x)^2 - \cos(4 \pi x) - 0.1 \cos(2 \pi x) - 0.1 \sin\left(2 \pi \left(x - \frac{\pi}{8}\right)^2 \right)\right\},
\end{equation}
and $g(x) = 0$, and $m_0$ being Gaussian center at $0$ with variance $0.01$ truncated to have Dirichlet boundary conditions. Figure \ref{fig:1} plots the solutions to this local and separable MFG, with $\beta \in \{0.1, 0.3, 0.5, 1.0\}$, and Figure \ref{fig:2} illustrates how $||u^\beta - u||_\infty$ varies against $\beta$ (for $\beta \in \{0.1, 0.2, \ldots, 0.9, 1\}$). To solve the MFG, we used Picard iteration and added damping for stabilization purposes, with every iteration first solving for the Fokker-Planck equation and then the HJB equation. Since the Fokker-Planck equation is linear, we can use a generic linear solver for the system of equations derived from the equation's finite difference representation, but since the HJB equation is nonlinear, we used Newton's method to solve its system of equations \footnote{Our numerical results are based on the codes available at \url{https://colab.research.google.com/drive/1shJWSD2MA5Fo7_rB625dAvNTdZS1a7bG?usp=sharing}.}.

\begin{figure}[h]
\centering
\begin{subfigure}{0.24\textwidth}
  \centering
  \includegraphics[width=\linewidth]{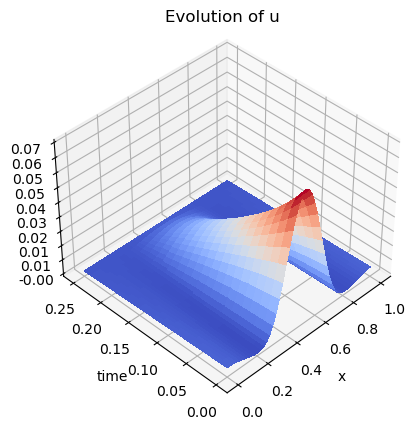}
\end{subfigure}%
\begin{subfigure}{0.24\textwidth}
  \centering
  \includegraphics[width=\linewidth]{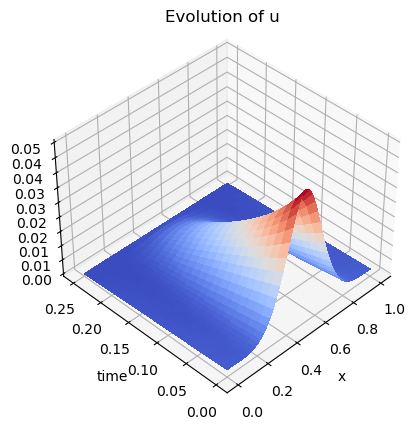}
\end{subfigure}
\begin{subfigure}{0.24\textwidth}
  \centering
  \includegraphics[width=\linewidth]{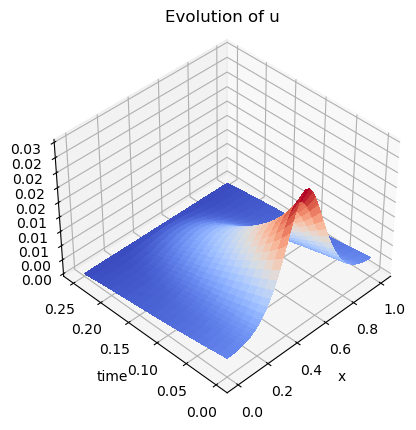}
\end{subfigure}
\begin{subfigure}{0.24\textwidth}
  \centering
  \includegraphics[width=\linewidth]{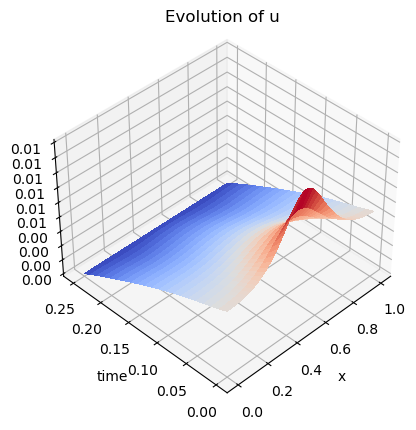}
\end{subfigure}
\begin{subfigure}{0.24\textwidth}
  \centering
  \includegraphics[width=\linewidth]{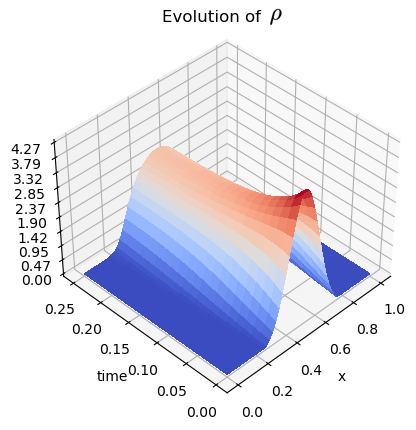}
\end{subfigure}%
\begin{subfigure}{0.24\textwidth}
  \centering
  \includegraphics[width=\linewidth]{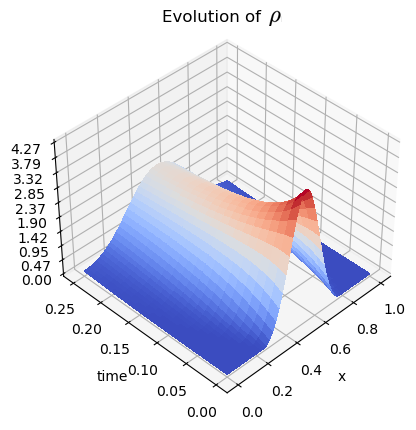}
\end{subfigure}
\begin{subfigure}{0.24\textwidth}
  \centering
  \includegraphics[width=\linewidth]{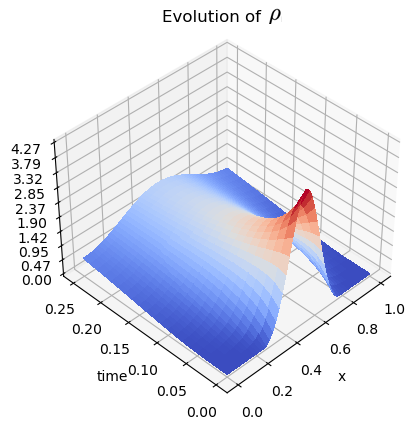}
\end{subfigure}
\begin{subfigure}{0.24\textwidth}
  \centering
  \includegraphics[width=\linewidth]{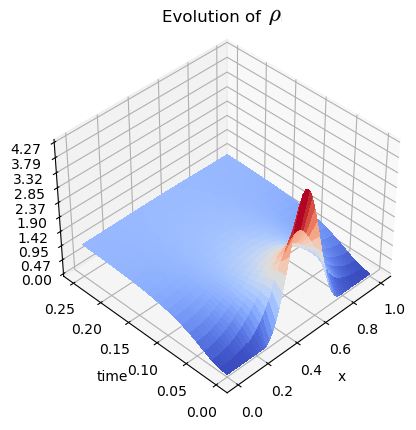}
\end{subfigure}
\caption{Plot of $(u^\beta, \rho^\beta)$ for $\beta \in \{0.1, 0.3, 0.5, 1.0\}$ (left to right).}
\label{fig:1}
\end{figure}

\begin{figure}[h]
\centering
\includegraphics[width=0.4\linewidth]{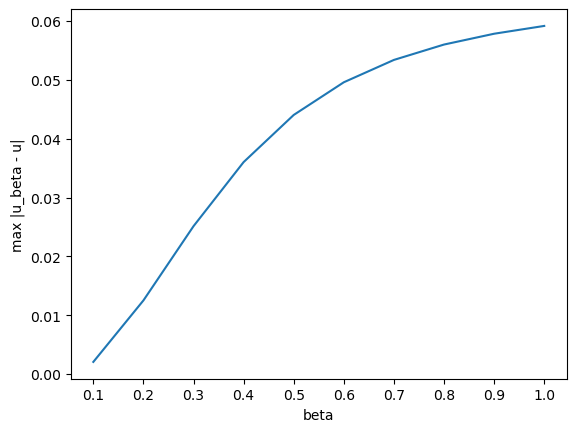}
\caption{Plot of $||u^\beta - u||_\infty$ again $\beta$.}
\label{fig:2}
\end{figure}

\quad In \cite{Tang2023b}, it was proved that $u^\beta$ converges at a rate of $\mathcal{O}(\beta^{\frac{1}{4}})$ in some weighted $L^2$ norm. Now by regressing $\log ||u^\beta - u||_\infty$ over $\log \beta$, we find that the slope is $1.050$ using all $\beta \in \{0.1, \ldots, 1.0\}$, while the slope is $1.162$ using the first half $\beta \in \{0.1, \ldots, 0.5\}$. It is natural to expect that
\begin{equation} \label{eq:expfit}
    ||u^\beta - u||_\infty \asymp \beta^{1+\delta} \quad \mbox{as } \beta \to 0,
\end{equation}
for some $0 < \delta < 1$. The rate \eqref{eq:expfit} is better than the proved $\mathcal{O}(\beta^{\frac{1}{4}})$-rate for MFGs with a local Hamiltonian, and is between the $\mathcal{O}(\beta)$-rate for MFGs with a general nonlocal Hamiltonian and the $\mathcal{O}(\beta^2)$-rate for the example in Section \ref{sc61}. An interesting question is to find suitable conditions on model data to achieve the rate in \eqref{eq:expfit} (with an explicit $\delta$), hence improving the bounds in \cite{Tang2023b}.

\section{Conclusion} \label{sc7}

\quad This paper studies the convergence rate of the vanishing viscosity approximation to MFGs with a nonlocal, and possibly non-separable Hamiltonian. With $\beta^2$ as the diffusivity constant, we prove that $u^\beta$ and $\rho^\beta$ converge a rate of $\mathcal{O}(\beta)$ in the topology of uniform convergence on compact sets and the $W_2$ metric, respectively. Our approach exploits both probabilistic and analytical arguments, where the FBSDE representation of the MFG is used to derive the convergence rate of $\rho^\beta$, and the rate of $u^\beta$ follows from a stability property of the HJB equation. We also apply our result to $N$-player games, mean field control, and policy iteration for MFGs.

\quad There are several directions to extend this work:
\begin{enumerate}[itemsep = 3 pt]
    \item
    First, our result is proved for MFGs with a nonlocal and possibly non-separable Hamiltonian. It would be interesting to establish the convergence result for MFGs with a local Hamiltonian, underpinning the numerical results in Section \ref{sc62}.
    \item
    Second, we prove in this work the convergence rate of vanishing viscosity for MFGs in $\mathbb{R}^d$; while \cite{TT23} considered the case in $\mathbb{T}^d$. The main difference between these two papers is that our work uses an FBSDE representation of the MFG together with a PDE stability result, while \cite{TT23} relies exclusively on PDE arguments. A natural question is whether the FBSDE approach can be extended to other domains, so that the convergence can be established for MFGs on domains other than $\mathbb{T}^d$ and $\mathbb{R}^d$.
    \item
    Finally, the vanishing viscosity approximation to MFGs can be regarded as a ``perturbation" of first order MFGs, where the perturbation is to add the operator $\frac{\beta^2}{2} \Delta$. We expect that the tools in this paper can also be used to analyze other types of perturbation, e.g., perturbation on the Hamiltonian. A notable example is the entropy-regularized relaxed control \cite{WZZ20} in the context of reinforcement learning, where the HJB equation is replaced with the exploratory equation under entropy regularization \cite{TZZ22}.
\end{enumerate}



\bigskip
{\bf Acknowledgment:} 
We thank the referee for their careful reading and valuable suggestions, which have greatly improved this work. We thank Alp\'{a}r M\'{e}sz\'{a}ros for pointing out an error in a lemma in a previous version of this work concerning the stability of McKean-Vlasov FBSDEs, and we also thank Daniel Lacker for helpful discussions on how to resolve the issue.
W. Yu is supported by the Columbia Innovation Hub grant and NSF grant DMS-2309245.
Q. Du is supported in part by NSF grants DMS-2309245 and DMS-1937254.
W. Tang is supported by NSF CAREER Award DMS-2538791, the Tang Family Assistant Professorship and a Columbia-CityU/HK collaborative project that is supported by InnoHK Initiative, The Government of the HKSAR and the AIFT Lab.
\bibliographystyle{abbrv}
\bibliography{references} 

\appendix

\end{document}